\numberwithin{equation}{section}
\newtheorem{theorem}[equation]{Theorem}
\newtheorem{lemma}[equation]{Lemma}
\newtheorem{proposition}[equation]{Proposition}
\newtheorem{corollary}[equation]{Corollary}
\theoremstyle{definition}
\newtheorem{definition}[equation]{Definition}
\theoremstyle{remark}
\newtheorem{remark}[equation]{Remark}
\def\kint_#1{\mathchoice%
          {\mathop{\kern 0.2em\vrule width 0.6em height 0.69678ex depth -0.58065ex
                  \kern -0.8em \intop}\nolimits_{\kern -0.4em#1}}%
          {\mathop{\kern 0.1em\vrule width 0.5em height 0.69678ex depth -0.60387ex
                  \kern -0.6em \intop}\nolimits_{#1}}%
          {\mathop{\kern 0.1em\vrule width 0.5em height 0.69678ex depth -0.60387ex
                  \kern -0.6em \intop}\nolimits_{#1}}%
          {\mathop{\kern 0.1em\vrule width 0.5em height 0.69678ex depth -0.60387ex
                  \kern -0.6em \intop}\nolimits_{#1}}}
\newcommand{\R}{\mathbb{R}}
\newcommand{\N}{\mathbb{N}}
\newcommand{\loc}{\operatorname{loc}}
\providecommand{\ch}[1]{\text{\raise 2pt \hbox{$\chi$}\kern-0.2pt}_{#1}}
\newcommand\dist{\operatorname{dist}}
\newcommand{\barint}{\mathop{\hbox{\vrule height3pt depth-2.7pt width.65em}\hskip-1em \int}}
\newcommand\Om{\Omega}
\newcommand\ti{\widetilde}
\newcommand\pip{\varphi}
\newcommand\eps{\epsilon}
\newcommand{\mybibitem}[7]{\bibitem[#1]{#1} #2. {\it #3,} #4 {\bf #5}
#6 #7.}
\begin{document}

\title[Regularity of quasiminimal sets]
{Regularity of sets with quasiminimal boundary surfaces in metric spaces}
\author[Kinnunen, Korte, Lorent and Shanmugalingam]{Juha Kinnunen, Riikka Korte, Andrew Lorent, and Nageswari Shanmugalingam}
\begin{abstract}
This paper studies regularity of perimiter quasiminimizing sets in metric measure spaces with
a doubling  measure and a Poincar\'e inequality. The main result shows that  the measure 
theoretic boundary of a quasiminimizing set coincides with
the topological boundary. 
We also show that such a set has finite Minkowski content and
 apply the regularity theory study rectifiability issues related to quasiminimal sets in strong $A_\infty$-weighted
Euclidean case.
\end{abstract}
\subjclass[2010]{49Q20, 26A45, 28A12}

\keywords{Functions of bounded variation, perimeter, minimal surfaces}

\maketitle


\section{Introduction}

It is now a well-known fact that Euclidean sets with (locally) minimal surfaces have smooth boundary apart from
a set of co-dimension $2$. This result is due to De Giorgi, see \cite{DG1} and \cite{DG2}. The analogous result for Euclidean quasiminimal
surfaces is due to David and Semmes~\cite{DS1}, who showed that bounded sets with quasiminimal boundary surfaces are uniformly
rectifiable and are locally John domains. 

The paper~\cite{DS1} considered a double obstacle problem in constructing quasiminimal surfaces in Euclidean spaces;
A similar problem was considered by Caffarelli and de la Llave in~\cite{CL}, where the setting is $C^2$ Riemannian manifolds.
In~\cite[Theorem~1.1]{CL} it is shown that given an Euclidean hyperplane (and the manifold is obtained by a perturbation of the Euclidean
metric in a $C^2$-fashion) there is a quasiminimal surface in the Riemannian metric that lies close to the hyperplane.
In~\cite{KKST2} a double obstacle problem similar to the one considered by~\cite{DS1} was studied in the setting of doubling 
metric measure spaces supporting a $(1,1)$-Poincar\'e inequality. It is therefore natural to ask what type of regularity properties
do the minimizing sets have away from the boundaries of the obstacles.

In this paper we study the regularity properties of quasiminimal sets or, more precisely, quasiminimal boundary surfaces
in the setting of metric measure spaces with a doubling measure that supports a $(1,1)$-Poincar\'e inequality. We will show,
by modifying De Giorgi's technique using a part of the argument of David and Semmes, that
such a set is porous and satisfies a measure density property. 
In particular, this implies that the measure theoretic boundary of a quasiminimizing set coincides with
the topological boundary. We also show that such a set has finite Minkowski content.  
In the metric setting the classical definition of rectifiability may not be as widely applicable.  
For instance, in the  
setting of Heisenberg groups there are sets of finite perimeter that are not rectifiable~\cite{Mag}.  
Hence the finiteness of  
the Minkowski content is the best one can hope for in this generality. 
Since the problem studied in~\cite{CL} is a minimization problem and
comes with an associated PDE, the techniques used there are essentially of PDE. The problem studied in~\cite{DS1} is a 
quasiminimization problem, and hence we find some of the methods used in this paper to be more easily adaptable to the 
general metric measure space setting.  

In the last two sections of this paper we apply the regularity theory developed
in the first part of the paper to study rectifiability issues related to quasiminimal sets in strong $A_\infty$-weighted
Euclidean setting. Observe that when equipped with a strong $A_\infty$-weight, the Euclidean space with Euclidean
metric need not satisfy a $1$-Poincar\'e inequality. However, there is a natural metric induced by the strong
$A_\infty$-weight, and we show in Section~6 
that the Euclidean space equipped with this natural metric and weighted measure
satisfy a $1$-Poincar\'e inequality. Hence we are able to use the theory developed in the first part
to study rectifiability issues of the boundary of quasiminimal sets in this modified Euclidean space. We 
consider this application in Section~7 of this paper. It is known that every strong $A_\infty$-weight is
not comparable to the Jacobian of a Euclidean quasiconformal mapping; it is therefore not possible to
use (unweighted) Euclidean results about regularity of sets with quasiminimal surfaces to study rectifiability
issues of boundaries of such sets in the strong $A_\infty$-weighted setting. We were able to apply the
theory developed in the general metric setting in the first five sections of this paper to successfully address
rectifiability issues in this weighted Euclidean setting.

For related results about isoperimetric sets in the Carnot group setting we refer the
interested reader to~\cite{LR}, where they show that isoperimetric sets (which are necessarily a special class of sets of
quasiminimal boundary surfaces) are Ahlfors regular (which also now follows from Corollary~\ref{Minkowski}) and are porous.
Regularity for Euclidean quasiminimizers that are asymptotically minimizers was studied by Rigot~\cite{R}, where it was shown that if the asymptotic minimality condition is sufficiently controlled, then the quasiminimal surface is H\"older 
smooth in big pieces. We point out that our results about the sets with quasiminimal boundary surfaces
apply to every boundary point of the set (of course, a modification of such a set on a measure zero subset would
still maintain quasiminimality while destroying the regularity at some boundary point; to avoid this trivial modification
we ensure, without loss of generality, that each point $x$ of the boundary of the set $E$ satisfies
$\mu(B(x,r)\cap E)>0$ and $\mu(B(x,r)\setminus E)>0$ for all $r>0$), and hence our results are weaker than
H\"older regularity of the boundary, but are applicable to each boundary point. Hence it might well be that the
studies related to rectifiability and weak tangents of
locally minimal surfaces in the Carnot group setting would be more approachable using the regularity properties
studied in this paper. It is a result of Lu and Wheeden~\cite{LW}  that Carnot groups are doubling and satisfy a
$1$-Poincar\'e inequality, and hence the results of this paper apply in the setting of Carnot groups (and indeed in
more general Carnot-Carath\'eodory spaces, which satisfy local versions of these conditions). A nice survey about
Poincar\'e inequalities and isoperimetric inequalities in the setting of Carnot groups can also be found in
\cite{J} and \cite{Hei}. 

It was shown in~\cite{AKL} that a subset $E$, of a Carnot group, with locally finite perimeter
has vertical weak tangents for $\Vert D\chi_E\Vert$-almost every point. Combining this with
our results (in particular, the consequence that every boundary point of such a set is in the measure-theoretic
boundary), we see that $\mathcal{H}^{Q-1}$-a.e.~boundary point of a set of quasiminimal boundary surface 
in a Carnot group has a vertical weak tangent ($Q$ is the 
homogeneous dimension of the group). The method of~\cite{AKL} uses the group structure; it would be
interesting to know whether such results hold for other Carnot-Carath\'eodory spaces such as
the Grushin spaces. Note that existence of weak tangents is weaker than rectifiability.
For a different notion of rectifiability in the Carnot group setting see~\cite[Section~3]{Mag}.

\section{Preliminaries}
A Borel regular outer measure is doubling if there is a constant $C>0$ such that 
for every ball $B\subset X$ we have $0<\mu(B)<\infty$ with $\mu(2B)\le C\, \mu(B)$.  For such a measure $\mu$, there
is a lower mass bound exponent $Q>0$; that is, whenever $x\in X$, $0<r\le R$, and $y\in B(x,R)$,  we have
\[
    \frac{\mu(B(y,r))}{\mu(B(x,R))}\ge \frac{1}{C}\, \left(\frac{r}{R}\right)^Q.
\]  

Given a function $f$
and a non-negative Borel measurable function $g$ on $X$, we say that $g$ is an upper gradient of $f$ if whenever
$\gamma$ is a rectifiable curve in $X$ (that is, a curve with finite length), we have
\begin{equation}\label{ugradineq}
   |f(y)-f(x)|\le \int_\gamma g\, ds,
\end{equation}
where $x$ and $y$ denote end points of $\gamma$. Here the above inequality should be interpreted to mean that
$\int_\gamma g\, ds=\infty$ whenever at least one of $|f(x)|$ and $|f(y)|$ is infinite; see for example~\cite{HeiK}.
The collection of all upper gradients, together, play the role of the modulus of the weak derivative
of a Sobolev function in the metric setting. We consider the norm 
\[
  \Vert f\Vert_{N^{1,1}(X)}:=\Vert f\Vert_{L^1(X)}+\inf_g\Vert g\Vert_{L^1(X)}
\]
with the infimum taken over all upper gradients $g$ of $f$. The Newton-Sobolev space considered in this paper is the space
\[
N^{1,1}(X)=\{f\, :\, \|f\|_{N^{1,1}(X)}<\infty\}/{\sim},
\]
where the equivalence relation $\sim$ is given by $f\sim h$ if and only if 
\[
\Vert f-h\Vert_{N^{1,1}(X)}=0.
\]

We say that $X$ supports a weak $(1,1)$-Poincar\'e inequality if there are constants $C>0$ and $\lambda\ge 1$
such that whenever $f$ is a function on $X$ with upper gradient $g$ and $B$ is a ball in $X$, we have 
\[
   \int_B|f-f_B|\, d\mu\le C\text{rad}(B)\, \int_{\lambda B}g\, d\mu.
\]
A function $f$ on $X$ is said to be of bounded variation, and denoted $f\in BV(X)$, if $f\in L^1(X)$ and there is a 
sequence $\{f_n\}_n$ of functions from $N^{1,1}(X)$ such that $f_n\to f$ in $L^1(X)$ and 
$\limsup_n\Vert f_n\Vert_{N^{1,1}(X)}<\infty$. The $BV$ norm of such a function $f$ is given by
\[
   \Vert f\Vert_{BV(X)}:=\inf_{\{f_n\}_n}\liminf_{n\to\infty} \Vert f_n\Vert_{N^{1,1}(X)},
\]
where the infimum is taken over all such convergent sequences. The $BV$ energy norm of $f$ is given by
\[
  \Vert Df\Vert(X):=\inf_{\{f_n\}_n}\liminf_{n\to\infty}\bigg[ \Vert f_n\Vert_{N^{1,1}(X)}-\Vert f_n\Vert_{L^1(X)}\bigg].
\]
We say that a Borel set $E\subset X$ is of \emph{finite perimeter}   
if $\chi_E\in BV(X)$. The \emph{perimeter measure} of 
the set $E$ is $P(E,X):=\Vert D\chi_E\Vert(X)$. See~\cite{Mi2} and~\cite{A} for more on $BV$ functions
and sets of finite perimeter in the metric setting. 
We point out here that in the Euclidean case with Lebesgue measure the above notion coincides with
the classical definition of $BV$ functions; see for example~\cite{EG}. It was shown by Miranda in~\cite{Mi2} 
that if $\mathcal{U}$
is the collection of all open subsets of $X$, then the map
$\mathcal{U}\ni O\mapsto \Vert Df\Vert(O)$ extends to a Radon measure on $X$. The  \emph{coarea  formula} 
\[
   \Vert Df\Vert(A)=\int_{-\infty}^\infty P(\{x\in X\, :\, f(x)>t\}, A)\, dt
\]
was also proven in~\cite{Mi2}.

In this paper, we assume that $\mu$ is a doubling Borel measure with lower mass bound exponent $Q>1$ and that 
$X$ is complete and 
supports a $(1,1)$-Poincar\'e inequality.  
Note that we can increase the value of $Q$ as we like, and so assuming $Q>1$ is not a serious restriction, and is assumed
merely for book-keeping.
We point out here that if $X$ supports a weak $(1,1)$-Poincar\'e 
inequality, then whenever $f\in BV(X)$ and $B$ is a ball in $X$, we have
\[
   \int_B|f-f_B|\, d\mu\le C\, \text{rad}(B)\, \Vert Df\Vert(\lambda B).
\] 
When considering the function $f=\chi_E$ for set $E\subset X$, the above inequality implies the 
\emph{ relative isoperimetric inequality} 
\[
    \min\{\mu(B\cap E),\mu(B\setminus E)\}\le C\, \text{rad}(B)\, P(E,\lambda B).
\]
In this paper $C$ will denote constants whose precise values are not needed, and so the
value of $C$ might differ even within the same line. 

It is well known that the Poincar\'e inequality implies a Sobolev-Poincar\'e inequality
if the measure is doubling. 
Indeed, by~\cite{HaKo} we have 
\[
   \Big(\barint_B|u-u_B|^t\, d\mu\Big)^{1/t}\le C \text{rad}(B)\barint_{\lambda B}g_u\, d\mu.
\]
with $t=Q/(Q-1)$ for all $u\in N^{1,1}(X)$.
By the definition of the $BV$ class, lower semicontinuity of the $BV$ norm, and the Lebesgue dominated convergence
theorem, we obtain the Sobolev inequality
\[
   \Big(\barint_B|u-u_B|^t\, d\mu\Big)^{1/t}\le C {\rm rad}(B)\frac{\Vert Du\Vert(2\lambda B)}
                                                                       {\mu(2\lambda B)}
\]
for all $u\in BV(X)$.
 
\begin{lemma}\label{Lemma2.1}
 Let $u\in BV(X)$ and $A=\{x\in B:|u(x)|>0\}$. If $\mu(A)\le \gamma\mu(B)$ for some $0<\gamma<1$,
then 
\[
  \Big(\barint_B|u|^t\, d\mu\Big)^{1/t}
        \le \frac{C}{1-\gamma^{1-1/t}}{\rm rad}(B)\frac{\Vert Du\Vert(2\lambda B)}
                                                                       {\mu(2\lambda B)}
\]
with $t=Q/(Q-1)$.
\end{lemma}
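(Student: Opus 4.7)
The plan is to derive this from the Sobolev--Poincar\'e inequality stated just before the lemma, combined with a H\"older estimate on $u_B$ that exploits the fact that $u$ vanishes on a large portion of $B$.

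The starting point is the triangle inequality in $L^t(B,\mu)$:
\[
   \Big(\barint_B|u|^t\, d\mu\Big)^{1/t}
       \le \Big(\barint_B|u-u_B|^t\, d\mu\Big)^{1/t}+|u_B|.
\]
The first term on the right is bounded directly by $C\,{\rm rad}(B)\,\Vert Du\Vert(2\lambda B)/\mu(2\lambda B)$ using the Sobolev--Poincar\'e inequality for $BV$ functions already recorded above. So the whole task reduces to estimating $|u_B|$.

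For the second term, since $u\equiv 0$ on $B\setminus A$,
\[
   |u_B|=\frac{1}{\mu(B)}\Big|\int_A u\, d\mu\Big|,
\]
and H\"older's inequality with conjugate exponents $t$ and $t/(t-1)$ on $A$ yields
\[
   |u_B|\le \frac{\mu(A)^{1-1/t}}{\mu(B)}\Big(\int_A|u|^t\, d\mu\Big)^{1/t}
        \le \Big(\frac{\mu(A)}{\mu(B)}\Big)^{1-1/t}\Big(\barint_B|u|^t\, d\mu\Big)^{1/t}
        \le \gamma^{1-1/t}\Big(\barint_B|u|^t\, d\mu\Big)^{1/t},
\]
where the last step uses the hypothesis $\mu(A)\le \gamma\mu(B)$.

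Combining the two estimates gives
\[
  \Big(\barint_B|u|^t\, d\mu\Big)^{1/t}\le C\,{\rm rad}(B)\,\frac{\Vert Du\Vert(2\lambda B)}{\mu(2\lambda B)}
  +\gamma^{1-1/t}\Big(\barint_B|u|^t\, d\mu\Big)^{1/t}.
\]
Since $u\in BV(X)\subset L^1(X)$, the average $u_B$ is finite, and the Sobolev--Poincar\'e inequality applied to $u$ shows that $u\in L^t(B)$; so the $L^t$ average on the left is finite and can legitimately be absorbed. Because $0<\gamma<1$ we have $1-\gamma^{1-1/t}>0$, and dividing by this factor yields the claim. There is no real obstacle; the only subtlety is verifying finiteness of the $L^t$ average before absorption, which is automatic in the present setting.
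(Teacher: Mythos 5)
Your argument is correct and is essentially the same as the paper's: Minkowski's inequality to split off $|u_B|$, the Sobolev--Poincar\'e inequality for the oscillation term, H\"older's inequality on $A$ to get the factor $\gamma^{1-1/t}$, and absorption. The remark on finiteness before absorbing is a reasonable extra care that the paper leaves implicit.
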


\begin{proof}
By Minkowski's inequality and the above-mentioned Sobolev inequality,
\begin{equation}\label{eq:1}
\begin{split}
  \Big(\barint_B|u|^t\, d\mu \Big)^{1/t}
  &\le \Big(\barint_B|u-u_B|^t\, d\mu \Big)^{1/t}+|u_B|\\
                  &\le C \text{rad}(B)\frac{\Vert Du\Vert(2\lambda B)}{\mu(2\lambda B)}+|u_B|.
\end{split}
\end{equation}
By the assumption on $u$ and H\"older's inequality,
\begin{align*}
  |u_B|&\le \barint_B|u|\, d\mu  =\frac{1}{\mu(B)}\, \int_A|u|\, d\mu\\
                               & \le \frac{1}{\mu(B)}\Big(\int_A|u|^t\, d\mu\Big)^{1/t}
                                     \mu(A)^{1-1/t}\\
                               & = \frac{1}{\mu(B)}\Big(\int_B|u|^t\, d\mu\Big)^{1/t}
                                     \mu(A)^{1-1/t}\\
                               & = \Big(\frac{\mu(A)}{\mu(B)}\Big)^{1-1/t}
                                      \Big(\barint_B |u|^t\, d\mu\Big)^{1/t} \\
                               & \le \gamma^{1-1/t}\Big(\barint_B |u|^t\, d\mu\Big)^{1/t}.
\end{align*}
So by~\eqref{eq:1}, 
\[
   (1-\gamma^{1-1/t})\Big(\barint_B |u|^t\, d\mu\Big)^{1/t}
                             \le C \, \text{rad}(B)\frac{\Vert Du\Vert(2\lambda B)}{\mu(2\lambda B)},
\]
from which the lemma follows.
\end{proof}

\begin{corollary}\label{SobPoin}
If $u\in BV(X)$ such that $u=0$ in $X\setminus B$ and $X\setminus 2B$ is non-empty, then 
\[
   \Big(\barint_B|u|^t\, d\mu\Big)^{1/t}\le C {\rm rad}(B)\frac{\Vert Du\Vert(\overline{B})}{\mu(B)}.
\]
\end{corollary}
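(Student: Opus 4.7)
The plan is to apply Lemma~\ref{Lemma2.1} on the enlarged ball $3B$. Write $B=B(x_0,r)$; since $u\equiv 0$ on $X\setminus B$, the set $A=\{x\in 3B:|u(x)|>0\}$ is contained in $B$, so it suffices to establish a bound $\mu(B)\le\gamma\,\mu(3B)$ with $\gamma<1$ depending only on the data.

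To obtain such a $\gamma$ I would invoke connectedness of $X$, which under the standing assumptions follows from the fact that a complete doubling metric measure space supporting a $(1,1)$-Poincar\'e inequality is quasiconvex. Connectedness of $X$ together with continuity of the map $y\mapsto d(x_0,y)$ forces its image to be an interval of $[0,\infty)$, and since this interval contains $0$ and, by the hypothesis $X\setminus 2B\ne\emptyset$, some value at least $2r$, it must also contain $2r$. Thus there is a point $z\in X$ with $d(x_0,z)=2r$. The triangle inequality then gives $B(z,r/2)\subset 3B\setminus B$, and the lower mass bound of the preliminaries produces a constant $c_0>0$ depending only on the doubling data with $\mu(B(z,r/2))\ge c_0\,\mu(3B)$. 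Since $B$ and $B(z,r/2)$ are disjoint subsets of $3B$, one concludes $\mu(B)\le(1-c_0)\mu(3B)$.

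Lemma~\ref{Lemma2.1} applied to the ball $3B$ with $\gamma=1-c_0$ then yields
\[
\Big(\barint_{3B}|u|^t\,d\mu\Big)^{1/t}\le C\,r\,\frac{\Vert Du\Vert(6\lambda B)}{\mu(6\lambda B)}.
\]
To recover the stated form I would argue that $\Vert Du\Vert$ is supported in $\overline{B}$: by the coarea formula, for each $t\ne 0$ the superlevel set $\{u>t\}$ either lies in $B$ (when $t>0$) or contains $X\setminus B$ (when $t<0$), and in either case $\chi_{\{u>t\}}$ is locally constant on the open set $X\setminus\overline{B}$, so its perimeter measure there vanishes. Consequently $\Vert Du\Vert(6\lambda B)=\Vert Du\Vert(\overline{B})$. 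Since $u\equiv 0$ outside $B$ one has $\int_{3B}|u|^t\,d\mu=\int_B|u|^t\,d\mu$, and the doubling hypothesis gives $\mu(3B)\le C\mu(B)\le C\mu(6\lambda B)$; combining these observations converts the estimate above into the claimed inequality.

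The main subtlety is producing the intermediate point $z$ at distance exactly $2r$; this is precisely where the hypothesis $X\setminus 2B\ne\emptyset$ is used together with connectedness of $X$, and the rest of the argument is just bookkeeping between $3B$ and $B$ via doubling.
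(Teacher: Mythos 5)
Your proof is correct and follows essentially the same route as the paper: use connectedness (from the Poincar\'e inequality) and the hypothesis $X\setminus 2B\ne\emptyset$ to locate a ball of comparable measure inside an enlargement of $B$ but disjoint from $B$, deduce $\mu(A)\le\gamma\,\mu(\text{enlarged ball})$ with $\gamma<1$ depending only on the data, and apply Lemma~\ref{Lemma2.1}. The only differences are cosmetic --- you work with $3B$ and a point at distance $2r$ where the paper uses $2B$ and a point at distance $3r/2$, and you spell out the final bookkeeping (that $\Vert Du\Vert$ is carried by $\overline{B}$ and that doubling converts the averages) which the paper leaves implicit.
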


\begin{proof}
Since $X\setminus 2B$ is non-empty, and because by the Poincar\'e inequality $X$ is path-connected, it 
follows that there is a point $y\in 2B\setminus B$ such that $d(y,x)=3r/2$ where $B=B(x,r)$. Therefore by
the doubling property of the measure $\mu$, we have 
\[
\mu(2B\setminus B)\ge \mu(B)/C\ge \mu(2B)/C^2
\]
for some constant $C>1$.
Because $A=\{z\in 2B\, :\, |u(z)|>0\}$ is a subset of $B$, it follows that
\[
  \frac{\mu(A)}{\mu(2B)}\le \frac{\mu(B)}{\mu(2B)}=\frac{\mu(2B)-\mu(2B\setminus B)}{\mu(2B)}
                        \le 1-C^{-2}<1.
\]
We can take $\gamma=1-C^{-2}$ in Lemma~\ref{Lemma2.1} to obtain the desired inequality.
\end{proof}

\section{Quasiminimizing surfaces and quasiminimizers}

\begin{definition}
Let $E\subset X$ be a Borel set of finite perimeter and $\Om\subset X$ be an open set. We say that
$E$ is a $K$-quasiminimal set, or has a \emph{$K$-quasiminimal boundary surface}, in $\Om$  
if for all open $U\Subset\Om$ and for all Borel sets $F,G\Subset U$,
\[
   P(E,U)\le K\, P((E\cup F)\setminus G, U).
\]
We say that a function $u\in BV(\Om)$ is a $K$-quasiminimizer if for all $\pip\in BV(\Om)$ with
support in $U\Subset\Om$,
\[
   \Vert Du\Vert (U)\le K\, \Vert D(u+\pip)\Vert(U).
\]
\end{definition}

\begin{lemma}\label{star}
 If $E$ is a $K$-quasiminimal set in $\Om$, then $u=\chi_E\vert_{\Om}$ is a $K$-quasiminimizer in $\Om$.
\end{lemma}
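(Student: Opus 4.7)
The plan is to reduce the quasiminimizer inequality for $u=\chi_E|_{\Om}$ to the quasiminimal-set inequality applied to the superlevel sets of $v:=u+\pip$, with the link supplied by the coarea formula. Fix $\pip\in BV(\Om)$ with support compactly contained in $U\Subset\Om$, and set $v:=\chi_E+\pip$. By the coarea formula,
\[
\Vert Dv\Vert(U)=\int_{-\infty}^{\infty} P(\{v>t\},U)\,dt,
\]
while $\Vert Du\Vert(U)=P(E,U)$. The goal is therefore to extract $P(E,U)\le K\,P(\{v>t\},U)$ from the quasiminimality hypothesis on a positive-measure set of levels $t$.

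The key observation is that for $t\in(0,1)$ the superlevel set $E_t:=\{v>t\}$ differs from $E$ only on $\operatorname{supp}\pip$: off $\operatorname{supp}\pip$ we have $v=\chi_E\in\{0,1\}$, so $v>t$ precisely where $\chi_E=1$, i.e., on $E$. Consequently $E_t\triangle E\subset\operatorname{supp}\pip\Subset U$, and setting $F_t:=E_t\setminus E$, $G_t:=E\setminus E_t$ yields the representation $E_t=(E\cup F_t)\setminus G_t$ with Borel $F_t,G_t\Subset U$, which is exactly the class of competitors admissible in the $K$-quasiminimality test on $E$.

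Hence for a.e.\ $t\in(0,1)$ (namely those levels for which $E_t$ has finite perimeter, a full-measure set by coarea) the definition of $K$-quasiminimal set yields $P(E,U)\le K\,P(E_t,U)$. Integrating over $(0,1)$ and using coarea together with the nonnegativity of $t\mapsto P(E_t,U)$ outside this interval,
\[
\Vert Du\Vert(U)=\int_0^1 P(E,U)\,dt\le K\int_0^1 P(E_t,U)\,dt\le K\int_{-\infty}^{\infty} P(E_t,U)\,dt=K\,\Vert D(u+\pip)\Vert(U),
\]
which is the required quasiminimizer inequality.

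The only mildly subtle point is the support bookkeeping that produces $F_t,G_t\Subset U$; this is precisely what requires us to restrict attention to $t$ in the interior of the range of $\chi_E$. At levels $t\ge 1$ or $t<0$ the symmetric difference $E_t\triangle E$ may spread beyond $\operatorname{supp}\pip$, so those levels are not admissible in the quasiminimality test, but they only contribute a nonnegative integrand to the coarea formula on the right-hand side and so do not obstruct the estimate.
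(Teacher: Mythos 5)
Your proof is correct and follows essentially the same route as the paper: restrict to levels $t\in(0,1)$, observe that $\{u+\pip>t\}$ agrees with $E$ outside the support of $\pip$ so that quasiminimality gives $P(E,U)\le K\,P(\{u+\pip>t\},U)$, and then integrate via the coarea formula. Your version merely makes the competitor decomposition $F_t,G_t$ and the support bookkeeping explicit where the paper leaves them implicit.
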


\begin{proof}
Since $E$ is of finite perimeter, it follows that $u\in BV(\Om)$. Also, if $\pip\in BV(\Om)$
with support in  $U\Subset\Om$, then for $0<t<1$, 
when $x\in X\setminus U$ we have $(u+\pip)(x)>t$ if and only if $x\in E$, and consequently
\[
  P(\{u+\pip>t\},U)\ge K^{-1}\, P(E,U),
\]
and so by the coarea formula,  
\begin{align*}
  \Vert Du\Vert(U)&=P(E,U)=\int_0^1P(E,U)\, dt\\
                        &\le K\int_0^1 P(\{u+\pip>t\},U)\, dt\\
                        &\le K\int_\R P(\{u+\pip>t\},U)\, dt=\, K\, \Vert D(u+\pip)\Vert(U),
\end{align*}
which shows that $u$ is a $K$-quasiminimizer.
\end{proof}

\section{Density}

The main result of this section is Theorem~\ref{theorem:density}, where we prove a uniform measure density estimate 
for quasiminimal sets. To prove the main result, we need the following lemma. For a proof of this lemma, we refer 
to~\cite[Lemma~5.1]{Gia}.

\begin{lemma}\label{lemma:g}
Let $R > 0$ and $f : (0,R] \rightarrow [0,1)$ be a bounded function. Suppose
that there exist some $\alpha> 0$, $0\leq\theta< 1$, and $\gamma\geq0$ such that for all
$0 <\rho< r\leq R < \infty$ we have
\begin{equation*}
 f(\rho)\leq\gamma (r-\rho)^{-\alpha} +\theta f(r).
 \end{equation*}
Then there is a constant $c = c(\alpha,\theta)$ so that for all $0 < \rho< r\leq R$,
\begin{equation*}
 f(\rho)\leq c \gamma(r -\rho)^{-\alpha}.
 \end{equation*}
\end{lemma}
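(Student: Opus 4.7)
The plan is to carry out the classical Giaquinta-style hole-filling iteration. First I would fix $0<\rho<r\le R$ and pick an auxiliary parameter $\tau\in(0,1)$ large enough that $\theta\tau^{-\alpha}<1$ (this is possible precisely because $\theta<1$; the natural choice is any $\tau\in(\theta^{1/\alpha},1)$). Then I would construct a sequence of intermediate radii $r_0=\rho$ and $r_{i+1}=r_i+(1-\tau)\tau^i(r-\rho)$, so that the telescoping sum gives $r_i=\rho+(1-\tau^i)(r-\rho)\nearrow r$ and the gaps are $r_{i+1}-r_i=(1-\tau)\tau^i(r-\rho)$.

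Next I would apply the hypothesis on each pair $(r_i,r_{i+1})$, obtaining
\[
 f(r_i)\le \gamma(1-\tau)^{-\alpha}\tau^{-i\alpha}(r-\rho)^{-\alpha}+\theta f(r_{i+1}).
\]
Iterating this inequality $k$ times produces
\[
 f(\rho)=f(r_0)\le \gamma(1-\tau)^{-\alpha}(r-\rho)^{-\alpha}\sum_{i=0}^{k-1}(\theta\tau^{-\alpha})^i+\theta^k f(r_k).
\]
Because $f$ is bounded (indeed, $f<1$) and $\theta<1$, the error term $\theta^k f(r_k)$ tends to $0$ as $k\to\infty$. Because $\theta\tau^{-\alpha}<1$, the geometric series converges to $(1-\theta\tau^{-\alpha})^{-1}$. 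Passing to the limit yields
\[
 f(\rho)\le \frac{\gamma\,(1-\tau)^{-\alpha}}{1-\theta\tau^{-\alpha}}\,(r-\rho)^{-\alpha},
\]
which is the desired bound with $c=c(\alpha,\theta)=(1-\tau)^{-\alpha}/(1-\theta\tau^{-\alpha})$ (after fixing, say, $\tau=((1+\theta^{1/\alpha})/2)$ so that $c$ depends only on $\alpha$ and $\theta$).

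The only real obstacle is selecting the sequence of radii in a way that simultaneously (i) has $r_i\to r$ so that the remainder term can be controlled by boundedness of $f$, and (ii) makes the geometric factors $\tau^{-i\alpha}$ compatible with the factor $\theta^i$ that accumulates through iteration; this forces the constraint $\theta\tau^{-\alpha}<1$, and the rest is routine summation. Note that boundedness of $f$ is used only qualitatively (to kill the tail $\theta^k f(r_k)$), so the specific value of the bound does not enter the constant $c$.
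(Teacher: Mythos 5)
Your proof is correct and is precisely the standard hole-filling iteration by which this lemma is proved in Giaquinta's book, which is the reference the paper cites in lieu of giving its own proof. The choice of radii $r_i=\rho+(1-\tau^i)(r-\rho)$ with $\theta\tau^{-\alpha}<1$, the geometric summation, and the use of boundedness of $f$ only to kill the tail $\theta^k f(r_k)$ all match the canonical argument.
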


The next result implies that every boundary point of a set of quasiminimal surface
belongs to the measure theoretic boundary.

\begin{theorem}\label{theorem:density}
If $E$ is a  quasiminimal set in $\Om$, then by modifying $E$ on a set of measure
zero if necessary, there exists $\gamma_0>0$ such that for all $z\in\Om\cap\partial E$,
\[
  \frac{\mu(B(x,r)\cap E)}{\mu(B(x,r))}\ge\gamma_0
  \quad\text{and}\quad
  \frac{\mu(B(x,r)\setminus E)}{\mu(B(x,r))}\ge\gamma_0
\]
whenever $0<r<{\rm diam}(X)/3$ such that $B(x,2r)\subset\Om$.
The density constant $\gamma_0$ depends solely on the doubling constant, the constants 
associated with the Poincar\'e inequality, and the quasiminimality constant $K$.
\end{theorem}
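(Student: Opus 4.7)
The plan is to adapt De Giorgi's density argument to the metric BV setting by combining a perimeter bound derived from quasiminimality with a Sobolev-type inequality, and then integrating a resulting differential inequality. Fix $z \in \Omega \cap \partial E$ with $B(z,2R) \subset \Omega$ and $R < \mathrm{diam}(X)/3$, and set $h(r) := \mu(E \cap B(z,r))$ and $V(r) := \mu(B(z,r))$. Since $X \setminus E$ is also $K$-quasiminimal, it suffices to prove $h(r) \geq \gamma_0 V(r)$; the complementary density follows by symmetry. The modification on a null set is effected by replacing $E$ with $E^* := \{x \in X : \mu(E \cap B(x,s)) > 0 \text{ for every } s > 0\}$: a $5r$-covering argument together with the doubling property gives $\mu(E \triangle E^*) = 0$, so $E^*$ is $K$-quasiminimal and one checks that any $z \in \partial E^*$ satisfies $h(z,r) > 0$ for every $r > 0$.

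The first main step is the perimeter estimate
\[
P(E, B(z,r)) \leq K\, h'(r) \qquad \text{for almost every } r \in (0, R).
\]
To prove it, compare $E$ with the competitor $F := E \setminus B(z,r)$ in the quasiminimality inequality applied on $U = B(z, r+\epsilon) \Subset \Omega$. Bound $P(F, U)$ by approximating $\chi_{X \setminus B(z,r)}$ by the Lipschitz cut-offs $\psi_\delta(x) := \min(1, \delta^{-1}\max(0, d(x,z) - r))$, which have Lipschitz constant $1/\delta$ and concentrate on the shell $B(z, r+\delta) \setminus B(z,r)$. The BV product rule applied to $\chi_E \psi_\delta$ together with lower semicontinuity of the total variation as $\delta \to 0$ yields
\[
P(F, U) \leq \Vert D\chi_E\Vert(U \setminus \overline{B(z,r)}) + h'(r)
\]
for a.e.\ $r$, where the $h'(r)$ term arises as the limit of $(h(r+\delta) - h(r))/\delta$ at Lebesgue points of the monotone function $h$. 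Since $P(E, \partial B(z,r)) = 0$ off a countable set of radii, sending $\epsilon \to 0$ and splitting the left-hand side across $\partial B(z,r)$ produces the claimed perimeter estimate. I expect this step to be the most delicate: the real work lies in making the BV approximation rigorous and correctly identifying the sphere contribution as $h'(r)$.

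For the second step, apply Corollary~\ref{SobPoin} to $u := \chi_{E \cap B(z,r)}$, which is supported in $B(z,r)$ and whose ambient complement $X \setminus B(z,2r)$ is nonempty by $r < \mathrm{diam}(X)/3$. Decomposing $P(E \cap B(z,r), \overline{B(z,r)}) \leq P(E, B(z,r)) + h'(r)$ via the same cut-off argument, and combining with the perimeter estimate of step~1, yields the differential inequality
\[
h(r)^{(Q-1)/Q}\, V(r)^{1/Q} \leq C(K+1)\, r\, h'(r) \qquad \text{for a.e.\ } r \in (0, R).
\]
Setting $\alpha(r) := h(r)^{1/Q}$, this rearranges to $\alpha'(r) \geq c\, V(r)^{1/Q}/r$ a.e. Since $\alpha$ is monotone and $\alpha(0+) = 0$ (single points are null under a doubling measure), integration gives
\[
h(r)^{1/Q} \geq c \int_{r/2}^{r} \frac{V(s)^{1/Q}}{s}\, ds \geq c\, V(r/2)^{1/Q} \log 2 \geq \gamma_0^{1/Q}\, V(r)^{1/Q},
\]
where the final inequality uses doubling in the form $V(r/2) \geq V(r)/C_d$. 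Hence $h(r) \geq \gamma_0 V(r)$ with $\gamma_0$ depending only on the doubling, Poincar\'e, and quasiminimality constants and on $Q$, which is the desired estimate.

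As an alternative to the direct approximation in step~1, Lemma~\ref{lemma:g} offers a cleaner route: a Lipschitz cut-off on the shell $B(z,r) \setminus B(z,\rho)$ of width $r - \rho$, inserted into quasiminimality, yields $P(E, B(z,\rho)) \leq (1 - K^{-1}) P(E, B(z,r)) + C\, h(r)/(r-\rho)$ for all $\rho < r$ in the relevant range. Absorbing the perimeter term via Lemma~\ref{lemma:g} with $\alpha = 1$, $\theta = 1 - K^{-1} < 1$, and $\gamma = h(R)$ then gives $P(E, B(z,\rho)) \leq C\, h(R)/(R - \rho)$ without requiring any inspection of sphere null sets at individual radii, and this clean bound feeds into the Sobolev inequality to close the argument in exactly the same way.
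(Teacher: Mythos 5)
Your argument is correct in outline but follows a genuinely different route from the paper's. Both proofs share the same skeleton (null-set modification so that boundary points see positive measure of $E$ and its complement, a perimeter bound from quasiminimality, and the Sobolev inequality of Corollary~\ref{SobPoin}), but the engines differ. You run the classical De Giorgi differential-inequality argument: slice with spheres to get $P(E,B(z,r))\le K\,h'(r)$ for a.e.\ $r$, feed this into the Sobolev inequality to obtain $h^{(Q-1)/Q}V^{1/Q}\le C(K+1)\,r\,h'$, and integrate $\alpha=h^{1/Q}$ over $(r/2,r)$ (using that for a monotone function the increment dominates the integral of the a.e.\ derivative). The paper instead avoids derivatives of $h$ entirely: it uses a hole-filling cutoff plus the iteration Lemma~\ref{lemma:g} to get the scale-uniform bound $\Vert Du\Vert(B(z,\tfrac34R))\le C\mu(E\cap B(z,R))/R$, deduces the self-improving inequality $\bigl(\tfrac{\mu(B(z,R/2)\cap E)}{\mu(B(z,R/2))}\bigr)^{1-1/Q}\le C\tfrac{\mu(B(z,R)\cap E)}{\mu(B(z,R))}$, and then iterates over dyadic scales to contradict the Lebesgue differentiation theorem when the density at scale $R$ is below $\gamma_0$ (this is the David--Semmes Lemma~3.30 mechanism). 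Your route is more direct but puts all the weight on the a.e.\ identification of the sphere contribution as $h'(r)$, which you correctly isolate as the delicate step and which does go through via the product rule and lower semicontinuity; the paper's route trades that for the extra iteration and contradiction argument. Note, though, that your closing aside is not quite right: the bound $P(E,B(z,\rho))\le C h(R)/(R-\rho)$ obtained from Lemma~\ref{lemma:g} is not a differential inequality and cannot be integrated as in your main argument; it closes the proof only via the paper's dyadic iteration, not ``in exactly the same way.''

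Two small repairs are needed. First, your modification $E\mapsto E^*=\{x:\mu(E\cap B(x,s))>0\text{ for all }s>0\}$ only guarantees $\mu(E\setminus E^*)=0$; in general $\mu(E^*\setminus E)$ can be positive (take $E$ open and dense of small measure), so $E^*$ need not be a null modification and need not be quasiminimal. Use the paper's two-sided modification instead: delete the points of $E$ carrying a ball in which $E$ is null and adjoin the points carrying a ball in which $X\setminus E$ is null; both adjustments are null by a covering argument, and afterwards every boundary point satisfies $\mu(B(x,r)\cap E)>0$ and $\mu(B(x,r)\setminus E)>0$. Second, Corollary~\ref{SobPoin} applied to $\chi_{E\cap B(z,r)}$ requires $X\setminus B(z,2r)\ne\emptyset$, which the hypothesis $r<\operatorname{diam}(X)/3$ does not quite give at the scale $r$ itself (it only produces a point at distance $>3r/2$ from $z$); apply the corollary at a slightly smaller scale, or prove the estimate for $r<\operatorname{diam}(X)/4$ and recover the stated range by doubling.
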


\begin{proof}
We can modify $E$ on a set of measure zero so that $\mu(B(x,r)\cap E)>0$ for all $x\in E$ and $r>0$, and 
$\mu(B(x,r)\setminus E)>0$ for all $x\in X\setminus E$ and $r>0$. This is done by removing points $x\in E$ for which
there is a positive number $r_x$ such that $\mu(B(x,r_x)\cap E)=0$ (and in doing so, note that we remove the
ball $B(x,r_x)$ from $E$ as well since all points in this ball also satisfy this condition) and adding into $E$
points $y$ for which there is a positive number $r_y$ such that $\mu(B(y,r_y)\setminus E)=0$ (and in doing so,
note that we include the ball $B(y,r_y)$ back into $E$). By Lebesgue differentiation theorem, such a modification is
done only on a set of $\mu$-measure zero.
This implies that for all $x\in \partial E$ and $r>0$, we have 
\[
\mu(B(x,r)\cap E)>0
\quad\text{and}\quad
\mu(B(x,r)\setminus E)>0.
\] 
By the relative isoperimetric inequality, 
we conclude that
\[
P(E,B(x,r))>0.
\]

Let $u=\chi_E$, and for $z\in\Om$ let $R>0$ such that  $B(z,2R)\subset\Om$  with $0<R<\text{diam}(X)/3$.  
For $0<r<R$ let $\eta$ be a $C/(R-r)$--Lipschitz continuous function such that $\eta=1$ on $B(z,r)$ and
$\eta=0$ on $X\setminus B(z,R)$, with $0\le \eta\le 1$ on $X$. Set 
\[
v=u-\eta u=(1-\eta)u.
\] 
Then $v=u$ on $X\setminus B(z,R)$, and so by the quasiminimality property of $u$ and the product rule
\begin{align*}
  \Vert Du\Vert(B(z,r))&\le \Vert Du\Vert(B(z,R))\le K\Vert Dv\Vert(B(z,R))\\
                       &\le K\left(\Vert Du\Vert(B(z,R)\setminus B(z,r))
                                     +\frac{C}{R-r}\int_{B(z,R)}u\, d\mu\right).
\end{align*}
Observe that $\eta$ is a bounded Lipschitz function and so the product rule is valid.
By setting $\theta=K/(K+1)<1$, we see that
\[
  \Vert Du\Vert (B(z,r))\le \theta \Vert Du\Vert (B(z,R))+\frac{C}{R-r}\int_{B(z,R)}u\, d\mu.
\]
Hence by Lemma~\ref{lemma:g}, there is a constant $C>0$, which is independent of $z,R$ and $E$, such that
\[
  \Vert Du\Vert (B(z,r))\le \frac{C}{R-r}\int_{B(z,R)}u\, d\mu 
                           =\frac{C}{R-r}\, \mu(E\cap B(z,R)).
\]
For $r=\tfrac34R$, from the above we get
\begin{equation}\label{eq:2}
  \Vert Du\Vert (B(z,\tfrac34R))\le\frac{2C}{R}\, \mu(B(z,R)\cap E).
\end{equation}
Let $\nu$ be a $2C/R$--Lipschitz function such that $0\le\nu\le1$ on $X$, $\nu=1$ on
$B(z,\tfrac12R)$, and $\nu=0$ on $X\setminus B(z,\tfrac34R)$. Setting $\pip=\nu u$, 
the product rule implies that
\[
  \Vert D\pip\Vert (B(z,\tfrac34R))\le \Vert Du\Vert(B(z,\tfrac34R))
                                           +\frac{2C}{R}\mu(E\cap B(z,R)).
\]
So by~\eqref{eq:2}, we arrive at
\begin{equation}\label{eq:3}
  \Vert D\pip\Vert (B(z,\tfrac34R))\le \frac{2C}{R}\mu(E\cap B(z,R)).
\end{equation}

Notice that $\varphi^t=\varphi=\chi_E$ in $B(x,R/2)$ and therefore by 
Corollary~\ref{SobPoin}  and~\eqref{eq:3}, we obtain
\begin{equation}\label{eq:6}
\begin{split}
   \left( \frac{\mu (B(z,\tfrac R2)\cap E) }{ \mu(B(z,\tfrac R2)) }  \right)^{1-1/Q}
   =& \Big( \barint_{B(z,\tfrac R2)}\varphi^t\,d\mu \Big)^{1/t}\\
   \leq & C R
   \frac{\Vert D\pip\Vert(B(z,\tfrac34R))}{\mu(B(z,\tfrac34R))}\\
             \le & C\, \frac{\mu(B(z,R)\cap E)}{\mu(B(z,R))}. 
\end{split}
\end{equation}
Applying the above argument also to $X\setminus E$, we see that 
\begin{equation}\label{eq:7}
   \left(\frac{\mu(B(z,\tfrac R2)\setminus E)}{\mu(B(z,\tfrac R2))} \right)^{1-1/Q}
             \le C\, \frac{\mu(B(z,R)\setminus E)}{\mu(B(z,R))}. 
\end{equation}

Up to now we have been using an adaptation of a part of the De Giorgi machinery. To complete the
proof we adapt the proof of~\cite[Lemma~3.30]{DS1}.
Recall that by our assumption, if $x\in\Om\cap(E\cup\partial E)$ and $r>0$ then 
$\mu(B(x,r)\cap E)>0$.
For $x\in\Om\cap(E\cup\partial E)$ and $z\in B(x,\tfrac R4)$, by the doubling property of $\mu$, we have
\begin{equation}\label{eq:8}
  \frac{\mu(B(z,\tfrac R2)\cap E)}{\mu(B(z,\tfrac R2))}\le C_d \frac{\mu(B(x,R)\cap E)}{\mu(B(x,R))}.
\end{equation}
Let $\gamma_0=1/(C^QC_d)>0$, where $C$ is as in~\eqref{eq:6}. Suppose that
\[
   \frac{\mu(B(x,R)\cap E)}{\mu(B(x,R))}=\gamma<\gamma_0.
\]
For positive integers $j$ we set $B_j=B(z,R/2^j)$. Then by a repeated application of~\eqref{eq:6},
with $t=Q/(Q-1)>1$, we obtain
\begin{align*}
  \frac{\mu(B_j\cap E)}{\mu(B_j)}&\le \left(C\, \frac{\mu(B_{j-1}\cap E)}{\mu(B_{j-1})}\right)^{Q/(Q-1)}\\
                    &\le C^{Q/(Q-1)}\left(C\, \frac{\mu(B_{j-2}\cap E)}{\mu(B_{j-2})}\right)^{(Q/(Q-1))^2}\\
                    &\le C^{t+t^2+\cdots+t^{j-1}}
                               \left(\frac{\mu(B_1\cap E)}{\mu(B_1)}\right)^{t^{j-1}}\\
                    &\le C^{Q\, t^{j-1}}(C_d\gamma)^{t^{j-1}}
                    = \left(C^Q C_d\gamma\right)^{t^{j-1}},
\end{align*}
where we also used~\eqref{eq:8}. Since $C^Q C_d\gamma<1$, it follows that for all $z\in B(x,R/4)$,
\[
  \liminf_{r\to 0}\frac{\mu(B(z,r)\cap E)}{\mu(B(z,r))}=0,
\]
and the Lebesgue differentiation theorem now implies that $\mu(B(x,R/4)\cap E)=0$,  
resulting in a contradiction.
Consequently, we have
\[
   \frac{\mu(B(x,R)\cap E)}{\mu(B(x,R))}\ge \gamma_0.
\]
A similar argument for $X\setminus E$ also gives
\[
   \frac{\mu(B(x,R)\setminus E)}{\mu(B(x,R))}\ge \gamma_0.
\]
This completes the proof.
\end{proof}


\section{Porosity}

By a result of David and Semmes~\cite{DS1}, sets with quasiminimal surfaces in the complement of two
disjoint cubes in the Euclidean space are uniform domains whose complements are also uniform
(and indeed, are isoperimetric sets). Whether quasiminimal surfaces must enclose uniform
domains is still open in the general metric setting, but now that we know such sets have each boundary point
as a point of density for both the set and its complement, we  
next show that these sets are uniformly locally porous.   
For us, the porosity is a reasonable weakening of the uniform domain condition.

By Theorem \ref{theorem:density}, without loss of generality we may assume that 
every point $x\in\Omega\cap\partial E$ has the property that 
\[
  \frac{\mu(B(x,r)\cap E)}{\mu(B(x,r))}\ge \gamma_0
  \quad\text{and}\quad
     \frac{\mu(B(x,r)\setminus E)}{\mu(B(x,r))}\ge \gamma_0
\]
whenever $0<r<\text{diam}(X)/3$ such that $B(x,2r)\subset \Omega$,

\begin{lemma}\label{upperGrowth}
Let $E$ be a quasiminimal set in $\Omega$ and $x\in\Omega\cap\partial E$. 
Then there exist $r_0$ and $C>0$ such that
\[
   C^{-1}\, \frac{\mu(B(x,r))}{r}\le P(E,B(x,r))\le C\, \frac{\mu(B(x,r))}{r},
\] 
whenever $0<r<r_0$ such that $B(x,2r)\subset\Omega$.
The constant $C$ is independent of $x$ and $r$.  
\end{lemma}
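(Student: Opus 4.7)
I would prove the two bounds separately, relying on tools that are already in place.

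For the upper bound, my plan is to repeat the Caccioppoli-type argument contained in the proof of Theorem~\ref{theorem:density}. That argument, built from the quasiminimality of $u=\chi_E$ together with the product rule applied to $v=(1-\eta)u$ for a cutoff $\eta$, followed by the hole-filling device ($\theta=K/(K+1)<1$) and Lemma~\ref{lemma:g}, produces the iteration-free estimate
\[
  \Vert Du\Vert(B(x,\rho)) \le \frac{C}{R-\rho}\,\mu(B(x,R)\cap E)
\]
for any $0<\rho<R$ with $\overline{B(x,R)}\subset\Om$. Given the hypothesis $B(x,2r)\subset\Om$, I would take $\rho=r$ and $R=\tfrac43 r$, so that $\overline{B(x,R)}\subset B(x,\tfrac53 r)\subset\Om$, and then the doubling property of $\mu$ gives
\[
  P(E,B(x,r)) \le \frac{3C}{r}\,\mu(B(x,\tfrac43 r)) \le \frac{C'}{r}\,\mu(B(x,r)).
\]

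For the lower bound, my plan is to combine the density theorem with the relative isoperimetric inequality. Let $\lambda\ge 1$ be the Poincar\'e dilation constant, and set $\rho=r/\lambda$. Since $B(x,2\rho)\subset B(x,2r)\subset\Om$ and $x\in\Om\cap\partial E$, Theorem~\ref{theorem:density} yields
\[
  \mu(B(x,\rho)\cap E)\ge\gamma_0\mu(B(x,\rho)) \quad\text{and}\quad \mu(B(x,\rho)\setminus E)\ge\gamma_0\mu(B(x,\rho)).
\]
The relative isoperimetric inequality applied to the ball $B(x,\rho)$ then gives
\[
  \gamma_0\,\mu(B(x,\rho)) \le \min\{\mu(B(x,\rho)\cap E),\mu(B(x,\rho)\setminus E)\} \le C\rho\, P(E,\lambda B(x,\rho))= \tfrac{Cr}{\lambda}\, P(E,B(x,r)).
\]
Doubling provides $\mu(B(x,\rho))\ge c\,\mu(B(x,r))$, and the lower bound $P(E,B(x,r))\ge C^{-1}\mu(B(x,r))/r$ follows with the choice $r_0=\operatorname{diam}(X)/3$ (so that Theorem~\ref{theorem:density} applies).

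The substance is already present in the paper's previous results; the main obstacle is simply the bookkeeping to verify that all the inclusions $\overline{B(x,R)}\subset\Om$ and $B(x,2\rho)\subset\Om$ used to invoke quasiminimality, density, and the relative isoperimetric inequality follow from the single hypothesis $B(x,2r)\subset\Om$. The factor $4/3$ in the choice $R=\tfrac43 r$ (rather than the factor $3/4$ that appears in Theorem~\ref{theorem:density}) is the key adjustment so that the output ball of the Caccioppoli estimate is exactly $B(x,r)$ while still fitting inside $\Om$.
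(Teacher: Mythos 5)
Your lower-bound argument coincides with the paper's: the paper simply states that the left-hand inequality ``follows immediately from the density property of both $E$ and $X\setminus E$ together with the relative isoperimetric inequality,'' and your version (applying Theorem~\ref{theorem:density} at the deflated radius $r/\lambda$, then the relative isoperimetric inequality and doubling) is exactly the intended computation. For the upper bound, however, you take a genuinely different route. The paper does \emph{not} reuse the Caccioppoli estimate; it invokes \cite[Lemma~6.2]{KKST1} to select a ``good'' radius $\rho\in(r,2r)$ with $P(B(x,\rho))\approx\mu(B(x,\rho))/\rho$ and $P(E,S(x,\rho))=0$, and then tests quasiminimality against the competitor $E\cup B(x,\rho)$, obtaining $P(E,B(x,r))\le K\,P(B(x,\rho))\approx K\mu(B(x,r))/r$ after letting $\eps\to0$. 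Your alternative --- rerunning the De~Giorgi/Caccioppoli estimate $\Vert Du\Vert(B(x,r))\le \tfrac{C}{R-r}\,\mu(B(x,R)\cap E)$ from the proof of Theorem~\ref{theorem:density} with $R=\tfrac43 r$ and finishing with doubling --- is also correct and is arguably more self-contained, since it avoids the external good-radius lemma and the sphere-measure-zero limiting argument; it even yields the slightly sharper bound with $\mu(B(x,R)\cap E)$ in place of $\mu(B(x,r))$. The one point that genuinely needs the justification you gesture at is that the paper derives the Caccioppoli estimate under the hypothesis $B(x,2R)\subset\Om$, which your choice $R=\tfrac43 r$ does not satisfy when only $B(x,2r)\subset\Om$ is given; but inspecting that derivation, only the compact containment of $\overline{B(x,R)}$ in $\Om$ is used (the perturbation $-\eta\chi_E$ is supported there, and Lemma~\ref{lemma:g} is purely real-variable), so your bookkeeping with $\overline{B(x,\tfrac43 r)}\subset B(x,\tfrac53 r)\Subset\Om$ does go through. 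Both approaches yield constants depending only on $K$, the doubling constant, and the Poincar\'e data, as required.
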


\begin{proof}
The inequality on the left-hand side follows immediately from the density property of
both $E$ and $X\setminus E$ together with the relative isoperimetric inequality, so it suffices to prove the
inequality on the right-hand side.

By the results in~\cite[Lemma~6.2]{KKST1}, we have that for all $r>0$ there exists 
$r<\rho<2r$ (indeed, a positive $1$-dimensional measure amount of them) such that
\[
P(B(x,\rho))\approx\frac{\mu(B(x,\rho))}{\rho}
\]
and we can also choose such $\rho$ so that 
$P(E,S(x,\rho))=0$, where 
\[
S(x,\rho)=\{z\in X:d(z,x)=\rho\}
\] 
is the sphere centered at $x$ with radius $\rho$. Fix $\eps>0$.
Then $B(x,r)\subset \overline{B(x,\rho)}\subset B(x,\rho+\eps)$, and so by the quasiminimizer property of $E$ we have
\begin{align*}
  P&(E,B(x,r))\le P(E,B(x,\rho+\eps))\le K\, P(E\cup B(x,\rho),B(x,\rho+\eps))\\
             &\le K\, \left[ P(B(x,\rho),B(x,\rho+\eps)) + P(E,B(x,\rho+\eps)\setminus B(x,\rho-\eps))\right]\\
             &= K\, \left[ P(B(x,\rho)) + P(E,B(x,\rho+\eps)\setminus B(x,\rho-\eps))\right].
\end{align*}
Since $P(E,S(x,\rho))=0$, we have that 
\[
  \lim_{\eps\to 0}P(E,B(x,\rho+\eps)\setminus B(x,\rho-\eps))=0.
\] 
It follows from the choice of $\rho$ and the doubling property of $\mu$ that 
\[
  P(E,B(x,r))\le K\, P(B(x,\rho)) \approx K\frac{\mu(B(x,\rho))}{\rho}\approx C K\frac{\mu(B(x,r))}{r}.\qedhere
\]
\end{proof}

\begin{theorem}\label{porous1}
If $E$ is a quasiminimal set in $\Omega$, then $E$ and $X\setminus E$ are 
locally porous in $\Omega$; that is,
for every $x\in\Omega\cap\partial E$ there exists $r_x>0$ and $C\ge 1$ such that whenever
$0<r<r_x$, there are points $y\in B(x,r)$ and $z\in B(x,r)$  
such that 
\[
B(y,r/C)\subset E\cap\Omega 
\quad\text{and}\quad 
B(z,r/C)\subset X\setminus E.
\]
The constant $C$ is independent of $x,r$. Furthermore, $r_x$ depends on $x$ only so
far as to have $B(x,10r_x)\subset\Omega$.
\end{theorem}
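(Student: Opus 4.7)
The plan is a proof by contradiction, using the estimates already at our disposal: the density property from Theorem~\ref{theorem:density} together with the relative isoperimetric inequality will force a perimeter lower bound incompatible with the upper bound $P(E,B(x,2r))\lesssim\mu(B(x,2r))/r$ of Lemma~\ref{upperGrowth} whenever the porosity constant $M$ is sufficiently large. Porosity of $X\setminus E$ then follows by the symmetric argument, as both Theorem~\ref{theorem:density} and Lemma~\ref{upperGrowth} are symmetric in $E$ and $X\setminus E$.

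Fix $x\in\Omega\cap\partial E$, pick $r_x>0$ small enough that $B(x,10r_x)\subset\Omega$, take $0<r<r_x$, and let $M\ge 1$ be a constant to be chosen. Suppose no $y\in B(x,r)$ satisfies $B(y,r/M)\subset E$. Then for each $y\in B(x,r/2)\cap E$ the ball $B(y,r/M)$ contains a point of $X\setminus E$; using that $X$ is quasiconvex (a standard consequence of completeness, doubling and the Poincar\'e inequality), connect $y$ to such a point by a curve of length at most $C_Q r/M$. Since its image is connected and meets both $E$ and $X\setminus E$, it must contain some $z_y\in\partial E$ with $d(y,z_y)\le C_Q r/M$.

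Choose a maximal $(4C_Q r/M)$-separated family $\{y_i\}\subset B(x,r/2)\cap E$, and set $z_i:=z_{y_i}$. The balls $B(z_i,r/M)$ are pairwise disjoint, since $d(z_i,z_j)\ge d(y_i,y_j)-2C_Q r/M\ge 2C_Q r/M$. By maximality, $B(x,r/2)\cap E\subset\bigcup_i B(y_i,4C_Q r/M)$, and doubling together with $B(y_i,4C_Q r/M)\subset B(z_i,(1+4C_Q)r/M)$ gives $\mu(B(y_i,4C_Q r/M))\lesssim\mu(B(z_i,r/M))$; combined with $\mu(B(x,r/2)\cap E)\gtrsim\mu(B(x,r))$ from Theorem~\ref{theorem:density}, this yields $\sum_i\mu(B(z_i,r/M))\gtrsim\mu(B(x,r))$. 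Applying Theorem~\ref{theorem:density} at each $z_i\in\partial E$ gives both $\mu(B(z_i,r/M)\cap E)$ and $\mu(B(z_i,r/M)\setminus E)$ bounded below by $\gamma_0\mu(B(z_i,r/M))$, and then the relative isoperimetric inequality produces
\[
P(E,B(z_i,\lambda r/M))\ge c\gamma_0\frac{M}{r}\mu(B(z_i,r/M)).
\]
Summing over $i$, using the bounded overlap of $\{B(z_i,\lambda r/M)\}$ (immediate from the disjointness of the undilated balls and doubling) and the fact that these dilated balls all lie in $B(x,2r)$ once $M$ is large, we obtain $P(E,B(x,2r))\gtrsim\gamma_0(M/r)\mu(B(x,r))$, which contradicts Lemma~\ref{upperGrowth} as soon as $M$ exceeds a threshold depending only on $\gamma_0$ and the structural constants.

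The principal obstacle is the covering bookkeeping: verifying that $\{B(z_i,r/M)\}$ is genuinely disjoint, that its total measure is comparable to $\mu(B(x,r))$, and that the $\lambda$-dilations cover enough of the boundary to harvest perimeter while still fitting inside $B(x,2r)$ with bounded overlap. A secondary issue is the appeal to quasiconvexity of $X$ for locating boundary points near the chosen centers; if one prefers to avoid this, one can instead argue directly with a variant of the density estimate on well-chosen sub-balls and produce the required relative-isoperimetric configurations by a more hands-on topological argument.
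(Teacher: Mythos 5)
Your argument is correct and follows essentially the same route as the paper: assume the critical porosity scale is too small, cover $B(x,r)\cap E$ by a disjointed family of balls at that scale, use the density estimate of Theorem~\ref{theorem:density} together with the relative isoperimetric inequality to get a perimeter lower bound of order $(M/r)\mu(B(x,r))$, and contradict the upper bound of Lemma~\ref{upperGrowth}. The only differences are cosmetic (a maximal separated net and an explicit quasiconvexity argument to locate the boundary points $z_i$, versus the paper's $5r$-covering with the density estimate applied implicitly at nearby boundary points), and your handling of the $\lambda$-dilation via bounded overlap is if anything slightly more careful than the paper's.
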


\begin{proof}
Fix $x\in\Omega\cap\partial E$. For $r>0$ such that $B(x,4r)\subset\Omega$, let $0<\rho\le r$ such that
for all $y\in B(x,r)\cap E$ the ball $B(y,\rho)$ intersects $X\setminus E$. Note that $\rho=r$ would
satisfy this requirement. If there is some $\rho$ with $r/20<\rho<r/10$ such that the above condition fails,
then there is some $y\in B(x,r)\cap E$ such that $B(y,r/20)\subset E$, and the porosity requirement is
satisfied at the scale $r$. If not, we can choose $\rho<r/10$ so that for every $y\in B(x,r)\cap E$,
the set $B(y,\rho)\setminus E$ is non-empty. In this case, we can cover $B(x,r)\cap E$ by a family
of balls $\{B(y_i,10\rho)\}_i$, such that the collection $\{B(y_i,2\rho)\}_i$ is pairwise disjoint.
Then by the doubling property of $\mu$ together with
the density property of the previous section,
\begin{align*}
  \gamma_0\, \mu(B(x,r)) 
  &\le \mu(B(x,r)\cap E)
  \\&\le \sum_i\mu(B(y_i,10\rho))
 \le C\sum_i\mu(B(y_i,\rho)).
\end{align*}
Note that by the density results of the previous section,
\[
  \mu(B(y_i,2\rho)\cap E)\ge C\, \mu(B(y_i,2\rho))
\]
and 
\[
\mu(B(y_i,2\rho)\setminus E)\ge C\, \mu(B(y_i,2\rho)).
\]
Hence by the relative isoperimetric inequality,
\[
   P(E, B(y_i,2\rho))\ge \frac{1}{C}\frac{\mu(B(y_i,\rho))}{\rho}.
\]
By the pairwise disjointness property of the above family of balls and the
relative isoperimetric inequality combined with the density property of the 
previous section and the choice of $\rho$, we have
\begin{align*}
  P(E,B(x,2r))&\ge \sum_i P(E,B(y_i,2\rho))\\
              &\ge \sum_i\frac{1}{C}\frac{\mu(B(y_i,\rho))}{\rho}
\ge\frac{1}{C} \frac{\mu(B(x,r))}{\rho}.
\end{align*}
By Lemma~\ref{upperGrowth}, we now have
\[
\frac{1}{C}\frac{\mu(B(x,r))}{\rho}\leq C \frac{\mu(B(x,r))}{r},
\]
and consequently $\rho\geq r/C$.
This means that there is a point $y\in B(x,r)\cap E$ such that 
$B(y,r/2C)\subset E$, 
 thus proving the porosity of $E$.
Similar argument with $X\setminus E$, which also is a quasiminimal set since $E$ is a
quasiminimal set,  gives the porosity of $X\setminus E$ in $\Omega$.
\end{proof}

The following corollary is a consequence of the porosity property proved above. Note that in the Euclidean setting, if
a set satisfies the conclusion of the following corollary, then it is uniformly rectifiable; see for example the discussion
in~\cite{DS1}.  Indeed, David and Semmes use this fact together with the notion of tangent hyperplanes 
to prove that $E$ then has to be locally a John domain.

As a consequence of the following corollary together with the results from~\cite[Theorem~4.1]{LT}, the
Assouad dimension of $\partial E$ is at most $Q-1$, and by~\cite[Theorem~4.2]{LT}, the Assouad dimension of
$\partial E$ is $Q-1$ if the measure $\mu$ is
Ahlfors $Q$-regular, that is, 
\[
\mu(B)\approx \text{rad}(B)^Q. 
\]
In~\cite{LT} the supremum of all such possible $\alpha$
is called the Aikawa co-dimension of $\partial E$.
We also refer to \cite{LT} for the definition of the Minkowski content of codimension $\alpha$.
We denote $\delta_E(x)=\dist(x,X\setminus E)$.

\begin{corollary}\label{Minkowski}
If $E$ a quasiminimal set in domain $\Om$, then, then $\Om\cap \partial E$ has finite Minkowski content of codimension $\alpha$
for $0<\alpha<1$, and 
\[
\int_{B(x_0,r)\cap E}\ \frac{1}{\delta_E(y)^\alpha}\, d\mu(y) \le C\, \frac{\mu(B(x_0,r))}{r^\alpha}
\]
for all $x_0\in\partial E$ and $r>0$ such that $B(x_0,10\lambda r)\subset \Omega$.
Furthermore, if $\alpha\ge 1$ then 
\[
    \int_{B(x_0,r)\cap E}\ \frac{1}{\delta_E(y)^\alpha}\, d\mu(y)=\infty.
\]
\end{corollary}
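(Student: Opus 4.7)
The plan is to control, for $0<t<r$, the measure of the inner $t$-neighborhood
\[
E_t:=\{y\in B(x_0,r)\cap E:\delta_E(y)\le t\}
\]
by $C(t/r)\mu(B(x_0,r))$, then feed this estimate into the layer-cake representation of $\int\delta_E^{-\alpha}\,d\mu$ to get the case $0<\alpha<1$; for $\alpha\ge 1$ a matching lower bound on $\mu(E_t)$ forces divergence. The two-sided perimeter estimate of Lemma~\ref{upperGrowth} and the porosity of $X\setminus E$ from Theorem~\ref{porous1} are exactly the ingredients needed.

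\emph{Upper Minkowski estimate.} Every $y\in E_t$ can be joined to a point of $X\setminus E$ by a quasi-geodesic of length at most $Ct$ (quasi-convexity comes from doubling plus the Poincar\'e inequality); a standard supremum argument on $\chi_E$ along the arc produces $p_y\in\partial E$ with $d(y,p_y)\le Ct$. Select a Vitali family of boundary-centered balls $\{B(p_i,t)\}_i$, with $p_i\in\partial E$ and $\{B(p_i,t/5)\}_i$ disjoint, covering $\partial E\cap B(x_0,r+Ct)$. Doubling gives $\mu(E_t)\le C\sum_i\mu(B(p_i,t))$, Lemma~\ref{upperGrowth} converts each summand into $Ct\,P(E,B(p_i,t))$, and bounded overlap of $\{B(p_i,t)\}$ (from doubling and disjointness of the $\{B(p_i,t/5)\}$) plus a second appeal to Lemma~\ref{upperGrowth} at scale $r$ bound $\sum_i P(E,B(p_i,t))\le C\,P(E,B(x_0,2r))\le C\mu(B(x_0,r))/r$. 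Running the same reasoning on the (also quasiminimal) set $X\setminus E$ extends the bound to the full $t$-neighborhood of $\partial E$ inside $B(x_0,r)$, yielding the finite Minkowski content claim.

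\emph{Integral for $0<\alpha<1$.} By Cavalieri,
\[
\int_{B(x_0,r)\cap E}\delta_E^{-\alpha}\,d\mu=\alpha\int_0^\infty t^{-\alpha-1}\mu(E_t)\,dt.
\]
Split at $t=r$. On $(0,r)$ the upper Minkowski estimate gives a contribution bounded by $C\mu(B(x_0,r))r^{-1}\int_0^r t^{-\alpha}\,dt=C(1-\alpha)^{-1}r^{-\alpha}\mu(B(x_0,r))$, finite precisely because $\alpha<1$. On $(r,\infty)$ the crude bound $\mu(E_t)\le\mu(B(x_0,r))$ yields $\alpha^{-1}r^{-\alpha}\mu(B(x_0,r))$. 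Together these give the asserted estimate.

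\emph{Divergence for $\alpha\ge 1$.} I prove the reverse bound $\mu(E_t)\ge c(t/r)\mu(B(x_0,r))$. By Theorem~\ref{theorem:density} the measure-theoretic and topological boundaries coincide, so $P(E,\cdot)$ is concentrated on $\partial E$; the lower half of Lemma~\ref{upperGrowth} gives $P(E,B(x_0,r/2))\ge c\mu(B(x_0,r))/r$. Combining this with the Vitali covering of $\partial E\cap B(x_0,r/2)$ and the upper half of Lemma~\ref{upperGrowth} forces $\sum_i\mu(B(p_i,t))\ge c(t/r)\mu(B(x_0,r))$. Inside each $B(p_i,t)$ porosity of $X\setminus E$ produces $z_i$ with $B(z_i,t/C_1)\subset X\setminus E$, so every $y\in B(p_i,t/(2C_1))\cap E$ has $\delta_E(y)\le t$; the density estimate of Theorem~\ref{theorem:density} gives $\mu(B(p_i,t/(2C_1))\cap E)\ge c\mu(B(p_i,t))$, and these sub-balls are disjoint by construction, summing to the desired lower bound on $\mu(E_t)$. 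Cavalieri then makes the integrand behave like $t^{-\alpha}/r$ near $0$, which is not integrable for $\alpha\ge 1$. The main obstacle is the two-sided perimeter bookkeeping in this last step: using concentration of $P(E,\cdot)$ on $\partial E$ to convert a global lower bound on $P(E,B(x_0,r/2))$ into a pointwise lower bound on $\sum_i\mu(B(p_i,t))$, and then using porosity to manufacture, inside each $B(p_i,t)$, a substantial piece of $E$ whose $\delta_E$ is genuinely $\le t$.
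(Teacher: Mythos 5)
Your argument is correct and is essentially the paper's own proof: both control the measure of the inner $t$-neighborhood of $\partial E$ by a Vitali covering with boundary-centered balls of radius comparable to $t$, convert ball measures to perimeter via the two-sided estimate of Lemma~\ref{upperGrowth} (equivalently, density plus the relative isoperimetric inequality), sum using disjointness, and feed the resulting bound $\mu(E_t)\lesssim (t/r)\mu(B(x_0,r))$ into the Cavalieri representation split at $t=r$; the divergence for $\alpha\ge 1$ likewise follows in both from the matching lower bound obtained from the lower perimeter growth. Your explicit quasiconvexity step identifying $\{\delta_E\le t\}$ with a $Ct$-neighborhood of $\partial E$ is a point the paper passes over with an ``$\approx$'' in defining $E_s^+$, so no essential difference remains.
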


\begin{proof}
By the Cavalieri principle, we see that 
\begin{align*}
   &\int_{B(x_0,r)\cap E}\frac{1}{\delta_E(y)^\alpha}\, d\mu(y)\\
        &=\int_0^\infty\mu\left(\{y\in B(x_0,r)\cap E\, :\, \delta_E(y)^{-\alpha}>t\}\right)\, dt\\
        &\approx \int_0^\infty \mu\left(\{y\in B(x_0,r)\cap E\, :\, \delta_E(y)<s\}\right)\, \frac{ds}{s^{1+\alpha}}\\  
        &\approx \int_0^r\mu\left(\{y\in B(x_0,r)\cap E\, :\, \delta_E(y)<s\}\right)\, \frac{ds}{s^{1+\alpha}}\\  
           &\qquad\qquad    +\int_{r}^\infty\frac{\mu(E\cap B(x_0,r))}{s^{1+\alpha}}\, ds\\
        &\approx \int_0^r\frac{\mu(E_s^+\cap B(x_0,r))}{s^{1+\alpha}}\, ds+\, C\, \frac{\mu(B(x_0,r))}{r^\alpha}.
\end{align*}
Here 
\[
E_s^+=\bigcup_{x\in \partial E}B(x,s)\cap E.
\]
To compute the measure of $E_s^+\cap B(x_0,r)$, we can cover
$E_s^+\cap B(x_0,r)$ by countably many balls $5\lambda B_j$ with radius $5\lambda s$, such that $\lambda B_j$ are pairwise disjoint. We also ensure that $5\lambda B_j$ has its center located in $B(x_0,r)\cap\partial E$. Now  
we have by the relative isoperimetric inequality and the porosity of $E$ and $\Omega\setminus E$ that  
\[
   \mu(B_j)\le C\, s\, P(E, \lambda B_j).  
\]
Thus by the doubling property of $\mu$ we conclude that
\begin{align*}
   \mu(E_s^+\cap B(x_0,r))
   &\le \sum_j\mu(5\lambda B_j)
   \le C\, \sum_j\mu(B_j)  
   \\&\le Cs\, \sum_j P(E,\lambda B_j)  
        \le Cs\, P(E,B(x_0,2\lambda r)).
\end{align*}
By Lemma~\ref{upperGrowth},  we know that 
\[
P(E,B(x_0,2\lambda r))\approx \frac{\mu(B(x_0,r))}r.
\]
Hence we can conclude that
\[
   \int_0^r\frac{\mu(E_s^+\cap B(x_0,r))}{s^{1+\alpha}}\, ds\le C\, \frac{\mu(B(x_0,r))}{r}\int_0^r\frac{ds}{s^\alpha},
\]
and so
\[
  \int_{B(x_0,r)\cap E}\frac{1}{\delta_E(y)^\alpha}\, d\mu(y)\le C\, \frac{\mu(B(x_0,r))}{r^\alpha}.
\]
To see the second part of the claim, we can use the fact that the balls $\lambda B_j$ are pairwise disjoint, together with
Lemma~\ref{upperGrowth} from which we get that $\mu(B_j)\approx s\, P(E,\lambda B_j)$, to obtain 
\[
   \mu(E_s^+\cap B(x_0,r))\ge \frac{1}{C}\sum_{\{j: 5\lambda B_j\subset B(x_0,r)\}} \mu(B_j)
          \ge \frac{s}{C}P(E,B(x_0,\tfrac{r}{5\lambda})),
\]
from which we see that when $\alpha\ge 1$,
\[
  \int_{B(x_0,r)\cap E}\frac{1}{\delta_E(y)^\alpha}\, d\mu(y)\ge \frac{1}{C}P(E,B(x_0,\tfrac{1}{5\lambda}r))\, \int_0^r\frac{ds}{s^\alpha}=\infty.
\]
\end{proof}

\begin{remark}
The above proof also indicates that 
\[
     C^{-1}\, s\, P(E,B(x_0,\tfrac{1}{5\lambda}r))\le \mu(E_s^+\cap B(x_0,r))\le C\, s\, P(E,B(x_0,2\lambda r))
\]
whenever $x_0\in\partial E$ and $B(x_0,10\lambda r)\subset\Omega$.
\end{remark}

If $E$ is a quasiminimal set, then by the density property of the previous section,  
we have
$\overline{E}=\overline{\text{int}(E)}$, and so we can replace $E$ with its interior $\text{int}(E)$.
So we may assume that $E$ is open. From the density property again, we can see that connected
components of $E$ are also quasiminimal sets in $\Omega$. So throughout the rest of the paper we will assume that
in addition $E$ is a connected open set, that is, a domain, of (uniformly locally) quasiminimal surface.

We say that a domain $E$ is a $BV_l$-extension domain
(in the sense of ~\cite{BuMa} and~\cite{BaMo}) if there are constants $C\ge 1$ and $\delta>0$ such that 
whenever $u\in BV(E)$ such that the diameter of the support of $u$ is smaller than $\delta$, then there is a 
function $Tu\in BV(X)$ such that 
\[
\Vert D\, Tu\Vert(X)\le C\, \Vert Du\Vert(E)
\] 
and $Tu=u$ on $E$.

\begin{lemma}
If $E$ is a domain of (uniformly) locally quasiminimal surface, then it is a $BV_l$-extension domain. 
\end{lemma}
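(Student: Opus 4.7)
The plan is to define $Tu$ by zero extension, i.e., $Tu := u$ on $E$ and $Tu := 0$ on $X\setminus E$. Since the support of $u$ has diameter less than $\delta$, we may enclose it in a ball $B_0 = B(x_0, r_0)$ with $B(x_0, 10\lambda r_0) \subset \Omega$, choosing $\delta$ small enough that the density estimate from Theorem~\ref{theorem:density}, the porosity from Theorem~\ref{porous1}, and the upper perimeter bound from Lemma~\ref{upperGrowth} all apply throughout $B_0$. Clearly $Tu \in L^1(X)$ with support in $B_0$.

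To check $Tu \in BV(X)$ with controlled norm, I would take a sequence $u_n \in N^{1,1}(E)$ with $u_n \to u$ in $L^1(E)$ and $\|u_n\|_{N^{1,1}(E)} \to \|u\|_{L^1(E)} + \|Du\|(E)$. For each $\epsilon>0$, set $\phi_\epsilon(x) := \min\{1, \operatorname{dist}(x, X\setminus E)/\epsilon\}$, which is $(1/\epsilon)$-Lipschitz and vanishes on $X\setminus E$. Define $u_{n,\epsilon} := u_n\phi_\epsilon$, extended by zero outside $E$. Then $u_{n,\epsilon} \in N^{1,1}(X)$ with upper gradient bounded pointwise by $g_{u_n} + \epsilon^{-1}|u_n|\chi_{N_\epsilon}$, where $N_\epsilon := \{x \in E : \operatorname{dist}(x,\partial E) < \epsilon\}$. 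So the whole argument reduces to the uniform collar estimate
\[
\frac{1}{\epsilon}\int_{N_\epsilon \cap B_0} |u_n|\, d\mu \le C\|u_n\|_{N^{1,1}(E)}.
\]

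The main obstacle is this collar estimate, which I propose to prove via a Vitali-type cover of $\partial E \cap 2B_0$ by balls $B_i = B(y_i, \epsilon)$ centered at $y_i \in \partial E$, selected so that the $(1/5\lambda)$-concentric balls are pairwise disjoint; Lemma~\ref{upperGrowth} together with doubling bounds the total count and yields bounded overlap of $\{\lambda B_i\}_i$. On each $B_i$, the exterior density $\mu(B_i\cap (X\setminus E)) \ge \gamma_0\mu(B_i)$ (Theorem~\ref{theorem:density}) supplies the hypothesis of Lemma~\ref{Lemma2.1} with $\gamma = 1-\gamma_0 < 1$; applied to the zero extension of a suitable localization of $u_n$ — namely, $u_n$ multiplied by a Lipschitz cutoff supported in $2B_i$ and vanishing on $X\setminus E$, whose zero extension automatically lies in $N^{1,1}(X) \subset BV(X)$ — the lemma gives $\int_{B_i \cap E}|u_n|\, d\mu \le C\epsilon\|u_n\|_{N^{1,1}(C B_i \cap E)}$. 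Summing using bounded overlap produces the collar bound, so $\|u_{n,\epsilon}\|_{N^{1,1}(X)} \le C\|u_n\|_{N^{1,1}(E)}$ uniformly in $\epsilon$. A diagonal subsequence $u_{n_k,\epsilon_k}$ converges to $Tu$ in $L^1(X)$, so the definition of $BV(X)$ forces $Tu \in BV(X)$ with $\|D(Tu)\|(X) \le C\|Du\|(E)$, establishing the $BV_l$-extension property.
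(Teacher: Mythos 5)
The central gap is the collar estimate, whose proposed proof is circular. To apply Lemma~\ref{Lemma2.1} on $B_i$ you must feed it a function in $BV(X)$, and all you have is $u_n\in N^{1,1}(E)$. Your localization is $u_n$ times a Lipschitz cutoff $\psi_i$ supported in $2B_i$ and vanishing on $X\setminus E$. If $\psi_i$ is genuinely Lipschitz, its constant is at least of order $1/\epsilon$, so the upper gradient of the zero extension $v$ contains the term $\epsilon^{-1}|u_n|$ on a collar $N_{C\epsilon}\cap 2\lambda B_i$; hence the right-hand side $C\epsilon\,\Vert Dv\Vert(2\lambda B_i)$ produced by Lemma~\ref{Lemma2.1} contains $C\int_{2\lambda B_i\cap N_{C\epsilon}}|u_n|\,d\mu$ with a constant $C\ge 1$ that cannot be absorbed, and summing over $i$ simply reproduces $C\epsilon^{-1}\int_{N_{C\epsilon}}|u_n|\,d\mu$ on the right --- the quantity being estimated. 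Worse, such a $\psi_i$ cannot equal $1$ on the part of $B_i\cap E$ within distance $\epsilon$ of $\partial E$, which is essentially all of $B_i\cap E$ since $B_i$ is centered on $\partial E$ with radius $\epsilon$; so $\int_{B_i}|v|\,d\mu$ does not dominate $\int_{B_i\cap E}|u_n|\,d\mu$ and the left-hand side of your per-ball inequality is not controlled either. If instead you take the discontinuous cutoff $\chi_E\eta_i$, the $BV$ energy of the zero extension acquires a jump part on $\partial E$ that is exactly the boundary trace of $|u_n|$ you are trying to bound. Either way the claimed inequality $\int_{B_i\cap E}|u_n|\,d\mu\le C\epsilon\Vert u_n\Vert_{N^{1,1}(CB_i\cap E)}$ is not established, and I do not see how to obtain it from density, porosity and Lemma~\ref{upperGrowth} alone.

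This points to the deeper issue: your argument never invokes the quasiminimality of $E$ itself, only its derived regularity properties, and those are not known to imply a trace or extension inequality. The paper's proof is structured around quasiminimality: by the Burago--Maz'ya/Baldi--Montefalcone criterion together with the coarea formula, it suffices to extend sets $F\Subset E$ of finite perimeter in $E$ with $P(\widehat F,X)\le (1+K)P(F,E)$; taking $\widehat F=F$ and using $E\setminus F$ as a competitor in the quasiminimality inequality yields $P(F,\partial E)\le K\,P(F,E)$ in a few lines of perimeter bookkeeping. Finally, even granting your collar estimate, your construction only gives $\Vert D(Tu)\Vert(X)\le C\bigl(\Vert u\Vert_{L^1(E)}+\Vert Du\Vert(E)\bigr)$, whereas the definition in the paper demands the pure energy bound $\Vert D(Tu)\Vert(X)\le C\Vert Du\Vert(E)$, and the $L^1$ term cannot be discarded without an additional Poincar\'e-type argument inside $E$.
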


\begin{proof}
Let the uniform quasiminimality constant be denoted $K\ge 1$. Let $F\subset E$ be a set of finite perimeter in  $E$ 
with $\text{diam}(F)<\delta$. By the criterion of $BV_l$-extension domains found in~\cite{BuMa} and~\cite{BaMo},
to verify that $E$ is a $BV_l$-extension domain it suffices to prove that we can extend every
such $F$ to a set $\widehat{F}$ such that 
\[
P(\widehat{F},X)\le (1+K)P(F,E).
\] 
Indeed, via approximation of $BV$ functions by Lipschitz functions
and by the coarea formula, it suffices to prove the above claim for relatively closed (in $E$) and bounded subsets $F$ of $E$. We will
show that we can choose $\widehat{F}=F$.

To see this, note that 
\[
P(F,X)=P(F,E)+P(F,\partial E),
\] 
and so we need to show that 
\[
P(F,\partial E)\le K\, P(F,E).
\]

Fix $\varepsilon>0$. Let $\Omega$ be a relatively compact open subset of $X$ such that $F\Subset\Omega$. 
We can take $\Omega$ sufficiently small so that $P(E,\Omega\setminus \overline F)<\varepsilon$.

Then by the local quasiminimality property of $E$, we see that
\begin{equation}\label{eqn:Eqm}
 P(E,\Omega)\leq K P(E\setminus F,\Omega).  
\end{equation}
Since
\[
P(E,\Omega)=P(E,\Omega\setminus\overline F)+P(E,\overline F)=P(E,\Omega\setminus\overline F)+P(F,\partial E)
\]
and
\[
 P(E\setminus F,\Omega)=P(E,\Omega\setminus \overline F)+P(E\setminus F,\overline F)=P(E,\Omega\setminus \overline F)+P(F,E),
\]
the estimate \eqref{eqn:Eqm} implies that
\[
 P(F,\partial E)\leq K P(F,E)+(K-1)\varepsilon.
\]
By letting $\varepsilon\rightarrow 0$, we obtain
\[
 P(F,\partial E)\leq K P(F,E).\qedhere
\]
\end{proof}

Recall that a domain $E$ is a local John domain if there exist constants $C, \delta>0$ such that whenever $x_0\in\partial E$
and $0<r<\delta$, for all points $x\in B(x_0,r)\cap E$ there is a point $y\in E\cap B(x_0,Cr)$ with $\delta_E(y)\ge r/C$ and
a curve $\gamma\subset E$, called a John curve, connecting $x$ to $y$ satisfying 
\[
   \ell(\gamma_{x,z})\le C\, \delta_E(z)
\]  
for all $z\in\gamma$; here $\gamma_{x,z}$ denotes a subcurve of $\gamma$ with end points $x$ and $z$.
We conclude this section with the following open question: if $E$ is a domain of locally quasiminimal surface, then
is it true that $E$ is a local John domain? In the Euclidean setting this question was answered in the affirmative by
David and Semmes~\cite{DS1}. The crucial part of the proof of~\cite{DS1} is to show that the boundary of a set of quasiminimal
surface lies locally close to a hyperplane; in the setting of metric measure spaces one does not have such a structure, and
the challenge is to construct an alternative approach.

\section{Support of Poincar\'e inequality in $(\R^n,d,\mu)$.}

Let $\omega$ be a strong $A_\infty$-weight on $\R^n$. 
Then we have that with the measure $\mu$ on $\R^n$ defined by   
the density condition
\[
d\mu(x)=\omega(x)\, d\mathcal{L}^n(x),
\]
there is a metric $d$ on $\R^n$ and a constant $C\ge1$ such that whenever $x,y\in\R^n$ and $B_{x,y}$ is the smallest
Euclidean ball in $\R^n$ containing $x$ and $y$ (that is, $B_{x,y}=B(\tfrac{x+y}{2},\tfrac{|x-y|}{2})$),  
\[
    \frac{1}{C}\mu(B_{x,y})^{1/n}\le d(x,y)\le C\mu(B_{x,y})^{1/n}.
\]
Since strong $A_\infty$-weights are $A_\infty$-weights,   
$\omega$ is a Muckenhoupt $A_p$-weight for some $p$. 
It follows that $\mu$ is a doubling measure with respect to the Euclidean metric.   
Hence, we have a constant $C\ge 1$ such that whenever $x,y\in\R^n$,
\begin{equation}\label{eq:1A}
   \frac{1}{C} \mu(B(x,|x-y|))\le d(x,y)^n\le C \mu(B(x,|x-y|)).
\end{equation}
For the definition and properties of strong $A_\infty$-weights, we refer to \cite{DS2}.

The metric space we consider here is $(\R^n,d,\mu)$. Balls in this metric are denoted with the superscript $d$ in order
to distinguish them from the Euclidean balls. So 
\[
B^d(x,r)=\{y\in\R^n\, :\, d(x,y)<r\},
\] 
while
\[
B(x,r)=\{y\in\R^n\, :\, |x-y|<r\}.
\] 
We note that the topology generated by the metric $d$ is the same one as the Euclidean topology.
 
 In this section we show that when the measure $\mu$ on $\R^n$ is given by a strong $A_\infty$-weight, then 
 the space $(\R^n,d,\mu)$ is an Ahlfors regular space supporting 
 a $(1,1)$-Poincar\'e inequality. Note that not all strong $A_\infty$-weights are $A_1$-weights, and so in general
 the space $(\R^n,|\cdot|, \mu)$ need not support a $(1,1)$-Poincar\'e inequality.
 The next result states that $(\R^n,d,\mu)$ is Ahlfors $n$-regular.
 
 \begin{lemma}\label{Ahlfors}
  There is a constant $C\ge1$ such that whenever $x\in\R^n$ and $r>0$, we have
\[
   \frac{1}{C} r^n\le \mu(B^d(x,r))\le Cr^n.
\]
\end{lemma}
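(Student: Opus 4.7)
The plan is to use the comparison estimate \eqref{eq:1A}, which is essentially tailor-made for this purpose, to sandwich the metric ball $B^d(x,r)$ between two Euclidean balls whose $\mu$-masses are both comparable to $r^n$. The key auxiliary object will be the function $\phi(s):=\mu(B(x,s))$ on Euclidean radii $s\ge 0$: it is continuous (since $\mu\ll\mathcal{L}^n$ and spheres are Lebesgue-null), vanishes at $s=0$, and tends to infinity as $s\to\infty$. The latter fact is the one non-computational ingredient; it follows from $\omega$ being an $A_\infty$-weight on $\R^n$, whose reverse H\"older (equivalently, reverse doubling) property forces $\mu(\R^n)=\infty$.

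For the upper bound I would proceed as follows. For any $y\in B^d(x,r)$, \eqref{eq:1A} gives $\phi(|x-y|)=\mu(B(x,|x-y|))\le C\,d(x,y)^n\le C r^n$. Setting
\[
R^*:=\sup\{s>0 \, :\, \phi(s)\le C r^n\},
\]
which is finite by the limit behavior of $\phi$, every $y\in B^d(x,r)$ satisfies $|x-y|\le R^*$, so $B^d(x,r)\subset\overline{B(x,R^*)}$. Since $\partial B(x,R^*)$ has Lebesgue (hence $\mu$-) measure zero, continuity of $\phi$ from below yields $\mu(B^d(x,r))\le\phi(R^*)\le C r^n$.

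For the lower bound I would, by the intermediate value theorem applied to $\phi$, pick $s_0>0$ with $\phi(s_0)=(r/(2C))^n$. For any $z\in B(x,s_0)$, \eqref{eq:1A} then gives
\[
d(x,z)^n\le C\,\mu(B(x,|x-z|))\le C\,\phi(s_0)=(r/2)^n,
\]
so $B(x,s_0)\subset B^d(x,r)$, and therefore $\mu(B^d(x,r))\ge\phi(s_0)=(r/(2C))^n$, as required.

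There is essentially no substantive obstacle here since \eqref{eq:1A} does all the heavy lifting and the rest is a continuity/intermediate value argument. The only subtle prerequisite is $\mu(\R^n)=\infty$, needed so that $\phi$ can reach any prescribed value; this is a standard consequence of the $A_\infty$-condition for weights on $\R^n$ and I would simply cite it (e.g.\ from \cite{DS2}) rather than reprove it.
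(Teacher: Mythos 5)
Your proof is correct, and it reaches the same sandwiching of $B^d(x,r)$ between two Euclidean balls as the paper, but by a genuinely different mechanism. The paper exploits compactness of $\overline{B}^d(x,r)$ to pick the Euclidean-farthest and Euclidean-nearest points $y,z$ of $\partial B^d(x,r)$, for which $d(x,y)=d(x,z)=r$, and then reads off both bounds directly from \eqref{eq:1A}; this is shorter but tacitly assumes $\partial B^d(x,r)\neq\emptyset$ and that the closed metric ball is compact. You instead run an intermediate-value argument on the radial mass function $\phi(s)=\mu(B(x,s))$, trading those topological facts for continuity of $\phi$ and $\mu(\R^n)=\infty$; the latter is indeed standard (doubling measures on unbounded connected spaces are reverse doubling, so $\phi(s)\to\infty$), and in fact the paper's own proof silently needs the same fact to guarantee $B^d(x,r)\neq\R^n$, so your version is if anything more careful about the prerequisites. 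Two cosmetic points: in the lower bound, $C\,\phi(s_0)=C\,(r/(2C))^n=r^n/(2^nC^{n-1})\le (r/2)^n$ rather than being equal to $(r/2)^n$ (harmless, since $C\ge 1$ and you only need $d(x,z)<r$), and your final lower constant is $(2C)^{-n}$ rather than $C^{-1}$, which is immaterial for the statement.
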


\begin{proof}
Let $y\in\partial B^d(x,r)$ such that 
\[
|y-x|=\sup\{|x-z| : z\in\partial B^d(x,r)\}.
\]
Note that as $\overline{B}^d(x,r)$ is compact, such $y$ exists. Then $B^d(x,r)\subset B(x,|x-y|)$, and
so by~\eqref{eq:1A}, we have
\[
   \mu(B^d(x,r))\le\mu(B(x,|x-y|))\le C d(x,y)^n = C r^n.
\]

Next, let $z\in\partial B^d(x,r)$ such that 
\[
|x-z|=\inf\{|x-z|\, :\, z\in\partial B^d(x,r)\}.
\] 
Then
$B(x,|x-z|)\subset B^d(x,r)$, and so again by~\eqref{eq:1A} we have
\[
  \mu(B^d(x,r))\ge\mu(B(x,|x-z|))\ge \frac{1}{C}d(x,z)^n=\frac{1}{C}r^n,
\]
 completing the proof.
 \end{proof}
 
  \begin{lemma}\label{App0}
 There is a Borel set $F\subset\R^n$ with $|F|=0$ such that whenever
 $\gamma$ is a curve in $\R^n$ which is rectifiable with respect to the Euclidean metric,  it is
 rectifiable with respect to the metric $d$ if $\int_\gamma \infty\chi_F+\omega^{1/n}\, ds$ is finite.
 In this case we also have that the length of $\gamma$ with respect to the metric $d$, denoted $\ell_d(\gamma)$,
 satisfies
 \[
    \ell_d(\gamma)\approx \int_\gamma\omega^{1/n}\, ds.
 \]
 \end{lemma}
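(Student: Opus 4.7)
The plan is to define $F$ as the complement of a full-measure set of good points for $\omega$, establish an infinitesimal comparison between $d(\gamma(t),\gamma(s))$ and $\omega(\gamma(t))^{1/n}|t-s|$ at almost every parameter $t$, and pass to the length integral by polygonal approximation.

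First I would specify $F$. Since $\omega$ is a strong $A_\infty$-weight, it is Muckenhoupt $A_p$ for some $p$, and by the reverse H\"older inequality lies in $L^q_{\loc}(\R^n)$ for some $q>1$; in particular both $\omega$ and $\omega^{1/n}$ are locally integrable. Let $F$ be the union of the complements of the Lebesgue sets of $\omega$ and $\omega^{1/n}$; then $|F|=0$. The hypothesis $\int_\gamma \infty\chi_F + \omega^{1/n}\, ds<\infty$ forces $\int_\gamma \chi_F\, ds = 0$, so for any Euclidean-arclength parameterization $\gamma:[0,L]\to\R^n$ we have $\gamma(t)\notin F$ for a.e.\ $t$, and $\omega^{1/n}\circ\gamma\in L^1([0,L])$.

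For the infinitesimal comparison, fix a parameter $t$ at which $|\gamma'(t)|=1$ and $\gamma(t)\notin F$; both hold simultaneously for a.e.\ $t$. For $s$ near $t$ the Euclidean ball $B_{\gamma(t),\gamma(s)}$ has center $(\gamma(t)+\gamma(s))/2\to\gamma(t)$ and radius $|\gamma(t)-\gamma(s)|/2=(1+o(1))|t-s|/2$. By the Lebesgue-point property of $\omega$ at $\gamma(t)$,
\[
\frac{\mu(B_{\gamma(t),\gamma(s)})}{|B_{\gamma(t),\gamma(s)}|}\longrightarrow\omega(\gamma(t))\qquad\text{as }s\to t,
\]
and combining with \eqref{eq:1A} I obtain
\[
\liminf_{s\to t}\frac{d(\gamma(t),\gamma(s))}{|t-s|}\asymp\omega(\gamma(t))^{1/n}\asymp\limsup_{s\to t}\frac{d(\gamma(t),\gamma(s))}{|t-s|},
\]
with constants depending only on $\omega$.

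To pass from the pointwise comparison to the global length formula, for any partition $0=t_0<\cdots<t_N=L$ the inclusion $B_{\gamma(t_{i-1}),\gamma(t_i)}\subset B(\gamma(t_i),t_i-t_{i-1})$ and \eqref{eq:1A} yield
\[
\sum_i d(\gamma(t_{i-1}),\gamma(t_i))\le C\sum_i \mu(B(\gamma(t_i),t_i-t_{i-1}))^{1/n},
\]
and as the mesh tends to zero the pointwise estimate identifies the right-hand side with $C\int_\gamma\omega^{1/n}\, ds$ in the limit; supremizing over partitions gives the upper bound $\ell_d(\gamma)\le C\int_\gamma\omega^{1/n}\, ds$. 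The lower bound $\ell_d(\gamma)\ge C^{-1}\int_\gamma\omega^{1/n}\, ds$ follows analogously from the reverse direction of \eqref{eq:1A} together with the fact that on fine partitions $|\gamma(t_i)-\gamma(t_{i-1})|/(t_i-t_{i-1})\to 1$, so that the chord balls capture essentially all of the infinitesimal $\omega$-mass. The main obstacle is the non-uniform rate of Lebesgue differentiation: the convergence $\mu(B(x,r))^{1/n}/r\to c_n^{1/n}\omega(x)^{1/n}$ holds pointwise a.e., but without uniform control in $x$. I would resolve this by dominating the partition sums by $C\sum_i M\omega(\gamma(t_i))^{1/n}(t_i-t_{i-1})$, where $M$ is the Hardy--Littlewood maximal operator; the local $L^p$-integrability of $M\omega$ (via reverse H\"older for strong $A_\infty$-weights) combined with a Fubini argument on a tubular neighborhood of $\gamma$ provides an integrable majorant, and dominated convergence then transfers the pointwise infinitesimal estimate into the integral identity.
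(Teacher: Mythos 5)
Your choice of $F$ and your infinitesimal comparison coincide with the paper's: the authors also take $F$ to be the set of non-Lebesgue points of $\omega$ and combine \eqref{eq:1A} with the Lebesgue differentiation theorem to show that the lower and upper dilatations $\underline{\rho}(x)=\liminf_{y\to x}d(x,y)/|x-y|$ and $\overline{\rho}(x)=\limsup_{y\to x}d(x,y)/|x-y|$ are both comparable to $\omega(x)^{1/n}$ off $F$. The divergence is in how this pointwise information is integrated along $\gamma$, and there your argument has a genuine gap in the upper bound $\ell_d(\gamma)\le C\int_\gamma\omega^{1/n}\,ds$ (which is also the direction that yields $d$-rectifiability, so it cannot be waived). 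You propose dominated convergence for the partition sums with majorant $C\bigl(M\omega(\gamma(t))\bigr)^{1/n}$, but the hypothesis of the lemma only provides $\omega^{1/n}\circ\gamma\in L^1([0,L])$ and gives no control on $M\omega$ restricted to $\gamma$. Your proposed remedy --- local $L^p$-integrability of $M\omega$ in $\R^n$ plus Fubini on a tubular neighborhood --- only shows that $(M\omega)^{1/n}$ is integrable along \emph{almost every} fiber of a foliation of the tube; $\gamma$ is a fixed Lebesgue-null set and may be an exceptional fiber. Since $\omega$ is merely strong $A_\infty$ and not $A_1$, one does not have $M\omega\le C\omega$ either, so the $n$-dimensional averages of $\omega$ near $\gamma(t)$ cannot be dominated by anything built from the one-dimensional data $\omega^{1/n}\circ\gamma$. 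No integrable majorant is available from the hypotheses, and the dominated convergence step collapses. (The lower bound is unaffected: Fatou's lemma needs no majorant.)

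The paper avoids this by invoking the standard sandwich $\int_\gamma\underline{\rho}\,ds\le\ell_d(\gamma)\le\int_\gamma\overline{\rho}\,ds$ for pointwise dilatations of one metric relative to another. The nontrivial half of that sandwich is proved not by dominated convergence but by a gauge/fine-covering argument: for each parameter $t$ one has $d(\gamma(t),\gamma(s))\le(\overline{\rho}(\gamma(t))+\eps)|s-t|$ for $|s-t|$ small depending on $t$, and Cousin's lemma produces tagged partitions subordinate to this gauge whose chord sums are Henstock--Riemann sums of $\overline{\rho}\circ\gamma$; since chord sums increase under refinement, this bounds the supremum defining $\ell_d(\gamma)$ without any uniform majorant. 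To repair your proof, replace the dominated convergence step by such a covering argument, or more simply invoke the David--Semmes characterization of strong $A_\infty$-weights, which gives $d(x,y)\approx\mu(B_{x,y})^{1/n}\le C\int_{\gamma|_{[s,t]}}\omega^{1/n}\,ds$ for any subarc joining $x=\gamma(s)$ to $y=\gamma(t)$ and makes the upper bound immediate upon summing over a partition.
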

 
 \begin{proof}
  Fix $x\in\R^n$. Then for $y\in\R^n$, by~\eqref{eq:1A} we have
  \begin{align*}
    \frac{d(x,y)}{|x-y|}&\approx \frac{\mu(B(x,|x-y|))^{1/n}}{|x-y|}\\
                                  &\approx \frac{1}{|B(x,|x-y|)|^{1/n}}\, \left(\int_{B(x,|x-y|)}\omega(z)\, dz\right)^{1/n}\\
                                  &\approx\left(\frac{1}{|B(x,|x-y|)|}\, \int_{B(x,|x-y|)}\omega(z)\, dz\right)^{1/n}.
  \end{align*}
  Denote
  \[
  \underline{\rho}(x)= \liminf_{y\to x}\frac{d(x,y)}{|x-y|}  
  \quad\text{and}\quad
  \overline{\rho}(x)=\limsup_{y\to x}\frac{d(x,y)}{|x-y|}.   
  \]
  Since $\omega\in L^1_{\loc}(\R^n)$ (the integrals being taken with respect to the Lebesgue measure), we see
  by Lebesgue differentiation theorem that for almost every $x\in\R^n$,
  \begin{align*}
     \omega(x)^{1/n}&=\lim_{y\to x}\left(\frac{1}{|B(x,|x-y|)|}\, \int_{B(x,|x-y|)}\omega(z)\, dz\right)^{1/n}\\
          &\le C \underline{\rho}(x)
          \le C\overline{\rho}(x)\\
          &\le C^2\lim_{y\to x} \left(\frac{1}{|B(x,|x-y|)|}\, \int_{B(x,|x-y|)}\omega(z)\, dz\right)^{1/n}
          =C^2\omega(x)^{1/n}.
  \end{align*}
  Let $F$ be the set of all non-Lebesgue points of $\omega$; then $\mu(F)=|F|=0$.
  Let $\gamma$ be an Euclidean rectifiable curve with $\int_\gamma\infty\chi_F+\omega^{1/n}\, ds<\infty$.
  Then $\mathcal{H}^1(\gamma^{-1}(\gamma\cap F))=0$, and in addition we have
  \[
     \int_\gamma \underline{\rho}\, ds\le \ell_d(\gamma)\le \int_\gamma\overline{\rho}\, ds,
  \]
  where $\ell_d(\gamma)$ is the length of $\gamma$ in the metric $d$. It follows that
  \[
     \int_\gamma \omega^{1/n}\, ds\le C\ell_d(\gamma)\le C^2\, \int_\gamma\omega^{1/n}\, ds.\qedhere  
  \]
 \end{proof}
 
 \begin{lemma}\label{App1}
 If $u$ is Lipschitz continuous with respect to the metric $d$, then $u\in W^{1,n}_{\loc}(\R^n)$.
 \end{lemma}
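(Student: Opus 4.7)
The plan is to realize $C L \omega^{1/n}$ as an ``upper gradient'' of $u$ with respect to the Euclidean metric and Lebesgue measure (for $L$ the $d$-Lipschitz constant), and then to conclude via the classical ACL characterization of $W^{1,n}_{\loc}(\R^n)$. The main input is Lemma~\ref{App0}, which lets us transfer the $d$-Lipschitz control into a line-integral control involving $\omega^{1/n}$.

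First I would observe that since the topology generated by $d$ coincides with the Euclidean topology, $u$ is continuous on $\R^n$ in the usual sense and in particular locally bounded, so $u\in L^\infty_{\loc}(\R^n)\subset L^n_{\loc}(\R^n)$. Next, I would use Lemma~\ref{App0}: if $\gamma$ is any Euclidean rectifiable curve satisfying $\mathcal{H}^1(\gamma\cap F)=0$ and $\int_\gamma \omega^{1/n}\,ds<\infty$, then $\gamma$ is $d$-rectifiable and $\ell_d(\gamma)\le C\int_\gamma \omega^{1/n}\,ds$. Combining this with the $d$-Lipschitz hypothesis for the endpoints $x,y$ of $\gamma$ gives
\[
|u(x)-u(y)|\le L\, d(x,y)\le L\, \ell_d(\gamma)\le CL\int_\gamma \omega^{1/n}\,ds.
\]

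The second step is to apply this along axis-parallel segments. Since $|F|=0$ and $\omega\in L^1_{\loc}(\R^n)$ with respect to Lebesgue measure, Fubini's theorem implies that for each $i$ and for Lebesgue-a.e. $\hat x\in\R^{n-1}$, the line $t\mapsto (\hat x,t)$ meets $F$ in a set of $\mathcal{H}^1$-measure zero, and $\omega^{1/n}$ is integrable on every bounded interval of this line (using H\"older: $\int_a^b\omega^{1/n}\,dt\le (b-a)^{1-1/n}(\int_a^b\omega\,dt)^{1/n}$). Applied to such a line, the inequality above shows that $t\mapsto u(\hat x,t)$ is absolutely continuous on bounded intervals, with classical derivative bounded a.e.\ by $CL\,\omega(\hat x,t)^{1/n}$.

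Consequently $u$ is ACL and its partial derivatives satisfy $|\partial_i u|\le CL\,\omega^{1/n}$ a.e. Since $\int_K(\omega^{1/n})^n\,dx=\int_K\omega\,dx<\infty$ for every compact $K$, we have $\omega^{1/n}\in L^n_{\loc}(\R^n)$, so $\partial_i u\in L^n_{\loc}(\R^n)$, and the ACL characterization yields $u\in W^{1,n}_{\loc}(\R^n)$. The only real subtlety — the ``hard part'' — is justifying that the upper-gradient-type inequality survives the passage from a general bi-Lipschitz-type comparison of the two metrics (valid only at Lebesgue points of $\omega$) to a statement along almost every line; this is handled cleanly by the combination of Lemma~\ref{App0} and Fubini's theorem with the weighted H\"older estimate above.
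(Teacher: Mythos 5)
Your proof is correct, but it follows a genuinely different route from the paper. The paper's proof never touches Lemma~\ref{App0}: it estimates the translated differences directly from~\eqref{eq:1A} via $|u(x+he_j)-u(x)|\le C\,d(x,x+he_j)\le C\mu(B(x,|h|))^{1/n}$, integrates this $n$-th power over a ball using Fubini and the doubling of $\mu$ to get $\int_{B(a,R)}|u(x+he_j)-u(x)|^n\,dx\le C|h|^n\mu(B(a,2R))$, and then invokes the difference-quotient characterization of Sobolev functions (\cite[Lemma~7.24]{GT}, which requires $p>1$ and is exactly why the exponent $n$ is convenient). You instead go through the ACL characterization, using Lemma~\ref{App0} plus Fubini to show that along a.e.\ axis-parallel line every subsegment satisfies $|u(x)-u(y)|\le CL\int_{[x,y]}\omega^{1/n}\,ds$, whence $u$ is absolutely continuous there with $|\partial_j u|\le CL\,\omega^{1/n}\in L^n_{\loc}$. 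Your argument is essentially the one the paper defers to Lemma~\ref{App2} (carried out there with $\operatorname{Mod}_1$-exceptional curve families rather than Fubini, and with a general continuous upper gradient $g$ in place of the constant $L$); as a payoff you get the explicit pointwise bound $|\nabla u|\le CL\,\omega^{1/n}$ for free, whereas the paper's route is shorter, self-contained at this stage, and independent of the Lebesgue-point analysis underlying Lemma~\ref{App0}. The one point worth making explicit in your write-up is that the a.e.\ bound on $\partial_j u$ along a good line uses that a.e.\ point of that line is a Lebesgue point of the restriction of $\omega^{1/n}$ to the line; this is standard but is the only place where the derivative bound, as opposed to mere absolute continuity, is actually extracted.
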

 
 \begin{proof}
  Let $h\in\R$, and let $e_j$, $j=1,\dots,n$ denote the standard orthonormal basis for $\R^n$. Since 
  $u$ is Lipschitz with respect to the metric $d$, we see by~\eqref{eq:1A} that
  \[
     |u(x+he_j)-u(x)|\le Cd(x,x+he_j)\le C\mu(B(x,|h|))^{1/n}.
  \]
  Thus, for $a\in\R^n$, $R>0$ and $|h|\le R$, by Fubini's theorem we see that
  \begin{align*}
    \int_{B(a,R)}|u(x+he_j)&-u(x)|^n\, dx\le C\, \int_{B(a,R)}\mu(B(x,|h|))\, dx\\  
              &\le C\int_{B(a,R)}\, \int_{B(a,2R)}\chi_{B(x,|h|)}(y)\, d\mu(y)\, dx\\
              &= C\int_{\R^n}\int_{R^n}\chi_{B(a,R)}(x)\, \chi_{B(x,|h|)}(y)\, d\mu(y)\, dx\\  
              &= C\int_{R^n}\int_{R^n}\chi_{B(a,R)}(x)\, \chi_{B(y,|h|)}(x)\, dx\, d\mu(y)\\   
              &\le C |h|^n\, \int_{B(a,2R)}\, d\mu(y)
              = C|h|^n\, \mu(B(a,2R)).
  \end{align*}
  So whenever $\Omega\Subset\R^n$ is an open set and $\Omega^\prime\Subset\Omega$, we can cover
  $\Omega^\prime$ by a countable collection $\{B_i\}$
  of balls of radius $R=\text{dist}(\Omega^\prime,\R^n\setminus\Omega)/2$ centered at points in $\Omega^\prime$
  and with bounded overlap of the balls $\{2B_i\}$, to obtain that
  \begin{align*}
    \int_{\Omega^\prime}\frac{|u(x+he_j)-u(x)|^n}{|h|^n}\, dx
          &\le \sum_i\int_{B_i}\frac{|u(x+he_j)-u(x)|^n}{|h|^n}\, dx\\
                   &\le C\sum_i \mu(2B_i)\ \le C \mu(\Omega)<\infty.
  \end{align*}
  So by~\cite[Lemma~7.24]{GT}, we see that $u\in W^{1,n}_{\loc}(\R^n)$.  
 \end{proof}
 
 By Lemma~\ref{App1} we know that if $u$ is Lipschitz continuous with respect to the metric $d$, then
 it has an Euclidean weak derivative $\nabla u$, and that $|\nabla u|$ is a minimal $1$-weak upper gradient (in the
 Euclidean metric) of $u$.
 
 In order to prove that $(\R^n,d,\mu)$ supports a $(1,1)$-Poincar\'e inequality, it suffices to prove the inequality
 for Lipschitz functions with respect to $d$ and their \emph{continuous} upper gradients in the metric $d$; see
 for example~\cite{Ke}. 
 Let $u$ be a Lipschitz function with continuous upper gradient $g$ in $(\R^n, d,\mu)$.
 
 \begin{lemma}\label{App2}
   We have a constant $C>0$, that is independent of $g$ and $u$, such that
   \[
       |\nabla u(x)|\le C\omega(x)^{1/n}g(x)
   \]
   for almost every $x\in \R^n$.
 \end{lemma}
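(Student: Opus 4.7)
The plan is to test the upper gradient inequality on short Euclidean segments parallel to the coordinate axes, convert the resulting $d$-arclength integrals into Euclidean integrals weighted by $\omega^{1/n}$ via Lemma~\ref{App0}, and then pass to the limit $h\to 0$ at Lebesgue points in order to recover the classical partial derivatives of $u$.

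Fix $j\in\{1,\dots,n\}$ and for $h>0$ let $\gamma_h^x$ denote the Euclidean segment $t\mapsto x+te_j$, $t\in[0,h]$. By Fubini applied to the null set $F$ from Lemma~\ref{App0} and to $\omega\in L^1_{\loc}$, for almost every $x\in\R^n$ the line $x+\R e_j$ meets $F$ in a set of one-dimensional measure zero and carries $\omega^{1/n}$ as a locally integrable function. For such $x$, Lemma~\ref{App0} guarantees that $\gamma_h^x$ is $d$-rectifiable for every small $h>0$, so the upper gradient inequality \eqref{ugradineq} gives
\begin{equation*}
|u(x+he_j)-u(x)|\le \int_{\gamma_h^x} g\,ds_d.
\end{equation*}

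To convert the $d$-arclength integral into a Euclidean one, I would subdivide $[0,h]$ into small pieces of Euclidean length $\eta$, apply Lemma~\ref{App0} to each piece (on which $g$ is nearly constant by continuity), and let $\eta\to 0$ to obtain
\begin{equation*}
\int_{\gamma_h^x}g\,ds_d\le C\int_0^h g(x+te_j)\,\omega(x+te_j)^{1/n}\,dt.
\end{equation*}
Equivalently, Lemma~\ref{App0} identifies $\omega^{1/n}$ as the metric derivative of the Euclidean parameterization of $\gamma_h^x$ with respect to $d$-arclength, so the same estimate follows by a change of variables. Dividing by $h$ and letting $h\to 0^+$ at a Lebesgue point along the $e_j$-line of the locally integrable function $g\omega^{1/n}$, we arrive at
\begin{equation*}
\limsup_{h\to 0^+}\frac{|u(x+he_j)-u(x)|}{h}\le C\,g(x)\,\omega(x)^{1/n}.
\end{equation*}

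By Lemma~\ref{App1}, $u\in W^{1,n}_{\loc}(\R^n)$, and the ACL characterization of Sobolev functions shows that, after a Fubini-type selection of a.e.\ line, each weak partial derivative $\partial_j u(x)$ exists classically a.e.\ and agrees with the above limit of difference quotients. Thus $|\partial_j u(x)|\le C g(x)\omega(x)^{1/n}$ for a.e.\ $x$, and summing over $j$ yields $|\nabla u(x)|\le C\omega(x)^{1/n}g(x)$ as desired. The main obstacle I expect is the step converting the $d$-arclength integral of the non-constant continuous function $g$ into the $\omega^{1/n}$-weighted Euclidean integral; Lemma~\ref{App0} supplies the constant case directly, and the upgrade requires a careful subdivision with continuity-based error control, or, alternatively, an explicit verification that the metric derivative of the Euclidean parameterization equals $\omega^{1/n}$ along almost every coordinate line.
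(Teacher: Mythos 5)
Your proposal is correct and follows essentially the same route as the paper: test the upper gradient inequality on coordinate-parallel segments, use Lemma~\ref{App0} (i.e.\ the pointwise comparability of the metric speed with $\omega^{1/n}$) to convert the $d$-arclength integral of $g$ into the $\omega^{1/n}$-weighted Euclidean integral, and then differentiate along almost every line using the ACL property of $u\in W^{1,n}_{\loc}$. The only cosmetic difference is that you select the good lines by Fubini while the paper phrases the exceptional families via $\mathrm{Mod}_1$-null curve families and V\"ais\"al\"a's theorem, which for parallel segments amounts to the same thing.
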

 
 \begin{proof}
  As in the proof of Lemma~\ref{App1}, we let $e_j$, $j=1,\dots,n$, denote the canonical orthonormal basis 
  of $\R^n$. For each $j=1,\dots,n$, we consider the collection $\Gamma_j$ of all line segments parallel to the direction of
  $e_j$. From~\cite[Section~7.2, page~21]{V}, we know that whenever $\Gamma\subset\Gamma_j$ satisfies 
  $\text{Mod}_1(\Gamma)=0$, we have 
  \[
  \Big|\bigcup_{\gamma\in\Gamma}\cup_{z\in\gamma}z\Big|=0.
  \]
  
  Let $\Gamma_a$ denote the collection of all compact line segments in $\Gamma_j$ for which 
  $\int_\gamma \infty\chi_F+\omega^{1/n}\, ds=\infty$ (with $F$ as in Lemma~\ref{App0}). Since by H\"older's 
  inequality we know that $\omega^{1/n}\in L^1_{\loc}(\R^n)$, it follows that $\text{Mod}_1(\Gamma_a)=0$;
  see for example~\cite{KoMc}.   
  
  Since $u$ is Lipschitz continuous with respect to $d$ and the topology
  induced by $d$ and the Euclidean topology coincide, we see that $u$ is continuous in the Euclidean space $\R^n$.
  By Lemma~\ref{App1} we also know that $u\in W^{1,1}_{\loc}(\R^n)$. 
  It follows from the discussion in~\cite{V} that $u$ is absolutely continuous on
  $\text{Mod}_1$-almost every compact Euclidean rectifiable curve in $\R^n$. Let $\Gamma_b$ denote the collection of
  all line segments $\gamma$ in $\Gamma_j$ along which $(u,|\nabla u|)$ does not support the upper gradient 
  inequality~\eqref{ugradineq}; that is, there is some sub-segment $\beta$ of $\gamma$ for which the 
  inequality~\eqref{ugradineq}
  fails. Since $|\nabla u|$ is a $1$-weak upper gradient of $u$, it follows that $\text{Mod}_1(\Gamma_b)=0$. Furthermore, let
  $\Gamma_c$ denote the collection of all segments $\gamma\in\Gamma_j$ for which $\int_\gamma|\nabla u|\, ds$ is
  infinite.
  
  Because $g$ is an upper gradient of $u$ in the metric $d$, by Lemma~\ref{App0} we know that whenever
  $\gamma\in\Gamma_j\setminus\Gamma_a$, for all sub-segments $\beta$ of $\gamma$ we have
  \[
     |u(x_\beta)-u(y_\beta)|\le C \int_\beta\omega^{1/n} g\, ds.
  \]
  Here $x_\beta$ and $y_\beta$ denote the two end points of $\beta$.
  It follows that if $\gamma\not\in\Gamma_b\cup\Gamma_c$ as well, then 
  for $\mathcal{H}^1$-almost every point $x\in\beta$, 
  \[
      |\partial_j u(x)|\le C\omega^{1/n}(x) g(x).
  \]
  Note that $\text{Mod}_p(\Gamma_a\cup\Gamma_b\cup\Gamma_c)=0$.
  Hence by the use of~\cite{V} again, we see that for almost every $x\in\R^n$ we have
  \[
      |\partial_j u(x)|\le C\omega^{1/n}(x) g(x).
  \]
  Now the conclusion follows by summing up over $j=1,\dots, n$.
 \end{proof}
 
 We next compare Euclidean balls with balls in the metric $d$.
 
 \begin{lemma}\label{App3}
 There is a constant $C>0$ such that whenever $x\in\R^n$ and $r>0$, there exist positive numbers
 $\lambda_x^r$ and $\tau_x^r$ such that
 \begin{equation}\label{eq:nested1}
   B(x,\lambda_x^r\, r)\subset B^d(x,r)\subset B(x,C\lambda_x^r\, r)
 \end{equation}
 and
 \begin{equation}\label{eq:nested2}
   B^d(x,\tau_x^r\, r)\subset B(x,r)\subset B^d(x,C\tau_x^r\, r).
 \end{equation}
 \end{lemma}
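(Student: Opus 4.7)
The plan is to define $\lambda_x^r$ as the (rescaled) Euclidean inradius of the $d$-ball $B^d(x,r)$, and $\tau_x^r$ analogously as the (rescaled) $d$-inradius of the Euclidean ball $B(x,r)$. The two inner inclusions then hold essentially by construction, and the two outer inclusions will follow by comparing, on each concentric sphere, the point farthest from $x$ with the point closest, and then passing from a comparison of $\mu$-measures to a comparison of radii through the basic identity \eqref{eq:1A}.

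First, for \eqref{eq:nested1} I would set
\[
\lambda_x^r\, r := \inf\{|x-w|:w\in\partial B^d(x,r)\}.
\]
Because the $d$-topology coincides with the Euclidean topology, $\overline{B}^d(x,r)$ is Euclidean-compact and $\partial B^d(x,r)\neq\emptyset$, so the infimum is attained at some $z\in\partial B^d(x,r)$; the inclusion $B(x,\lambda_x^r r)\subset B^d(x,r)$ then follows by applying the intermediate value theorem to the continuous function $d(x,\cdot)$ along Euclidean line segments emanating from $x$. For the outer half, let $y\in\overline{B}^d(x,r)$ maximise $|x-\cdot|$, so that $B^d(x,r)\subset B(x,|x-y|)$. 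Since $d(x,y)\le r=d(x,z)$, the identity \eqref{eq:1A} yields
\[
\mu(B(x,|x-y|))\le C r^n\le C^2 \mu(B(x,\lambda_x^r r)),
\]
and the lower mass bound of $\mu$ (which holds because $\mu$ is a doubling measure on the path-connected space $\R^n$) then forces $|x-y|\le C'\lambda_x^r r$.

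For \eqref{eq:nested2} I would mirror the argument: set
\[
\tau_x^r\, r := \inf\{d(x,w):w\in\partial B(x,r)\},
\]
attained at some $z'\in\partial B(x,r)$ by the same compactness and continuity, with the inclusion $B^d(x,\tau_x^r r)\subset B(x,r)$ following as before. Letting $y'\in\overline{B}(x,r)$ maximise $d(x,\cdot)$, the points $y'$ and $z'$ both satisfy $|x-y'|=|x-z'|=r$, so \eqref{eq:1A} directly gives
\[
d(x,y')\le C\mu(B(x,r))^{1/n}\le C^2 d(x,z')=C^2\tau_x^r r,
\]
whence $B(x,r)\subset B^d(x,C^2\tau_x^r r)$; notice that no reverse doubling is needed here, as both comparisons are already in terms of the common Euclidean radius $r$. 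The only genuine step in the whole argument is therefore the passage from comparability of the $\mu$-measures of two concentric Euclidean balls to comparability of their radii used in \eqref{eq:nested1}; this is exactly the reverse doubling property of $\mu$, which is automatic for a doubling measure on the path-connected $\R^n$ and was already recorded in the Preliminaries, so I do not anticipate any real obstacle.
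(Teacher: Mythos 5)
Your proof is correct and follows essentially the same route as the paper: define $\lambda_x^r$ and $\tau_x^r$ via the extremal points of the relevant boundary sphere, obtain the inner inclusions essentially by construction, and convert the measure comparison supplied by \eqref{eq:1A} into a radius comparison via reverse doubling (the paper records this as \eqref{UMB1}; strictly speaking it is the upper mass bound rather than the lower mass bound of the Preliminaries, but, as you note at the end, it is automatic for a doubling measure on the connected, unbounded space $\R^n$). The only real difference is in \eqref{eq:nested2}, where you exploit the fact that both extremal points lie on the common Euclidean sphere $\partial B(x,r)$ and apply \eqref{eq:1A} directly, whereas the paper routes the comparison through the Ahlfors regularity of the $d$-balls (Lemma~\ref{Ahlfors}); both work, and yours is marginally shorter.
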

 
 \begin{proof}
 Since $\mu$ is a doubling measure on the Euclidean space $\R^n$ and $\R^n$ is uniformly perfect, 
 there is a constant $Q_1>0$ such that whenever $0<r<R$,
 \begin{equation}\label{UMB1}
    \frac{\mu(B(x,r))}{\mu(B(x,R))}\le C\, \left(\frac{r}{R}\right)^{Q_1}.
 \end{equation}
 Let $z_x^r, y_x^r\in\partial B^d(x,r)$ such that for $y\in\partial B^d(x,r)$ we have $|x-z_x^r|\le |x-y|\le |x-y_x^r|$.
 Then set 
 \[
   \lambda_x^r=\frac{|x-z_x^r|}{r}.
 \]
 We have 
 \[
 B(x,|x-z_x^r|)=B(x,\lambda_x^r\, r)\subset B^d(x,r)\subset B(x,|x-y_x^r|).
 \]
 Because of the upper mass bound~\eqref{UMB1}, by the twice-repeated use of~\eqref{eq:1A},
 \[
     \frac{1}{C}\le \frac{\mu(B(x,|x-z_x^r|))}{\mu(B(x,|x-y_x^r|))}
          \le C\,\left(\frac{|x-z_x^r|}{|x-y_x^r|}\right)^{Q_1},
 \]
 and so it follows that $|x-y_x^r|\le C|x-z_x^r|$, whence we obtain that 
 $B(x,|x-y_x^r|)\subset B(x,C|x-z_x^r|)$, and this proves~\eqref{eq:nested1}.
 
 To prove~\eqref{eq:nested2},  we  consider $w_x^r\in\partial B(x,r)$ such that
 $d(x,w_x^r)\le d(x,y)$ whenever $y\in\partial B(x,r)$, and set 
 \[
      \tau_x^r=\frac{d(x,w_x^r)}{r}.
 \]
 As in the previous argument, we consider also $a_x^r\in\partial B(x,r)$ such that 
 $d(x,a_x^r)\ge d(x,y)$ for all $y\in\partial B(x,r)$, and obtain by the use of Lemma~\ref{Ahlfors} that
 \[
     \frac{\mu(B^d(x,d(x,w_x^r)))}{\mu(B^d(x,d(x,a_x^r)))}\approx \left(\frac{d(x,w_x^r)}{d(x,a_x^r)}\right)^n,
 \]
 and from~\eqref{eq:1A} we also see that 
 \[
    \mu(B^d(x,d(x,w_x^r)))\approx d(x,w_x^r)^n\approx\mu(B(x,|x-w_x^r|))=\mu(B(x,r)).
 \]
 A similar argument as above also shows that
 \[
   \mu(B^d(x,d(x,a_x^r)))\approx\mu(B(x,r)).
 \]
 It follows that
 \[
    \left(\frac{d(x,w_x^r)}{d(x,a_x^r)}\right)^n\ge \frac{1}{C},
 \]
 that is, $d(x,a_x^r)\le C\, d(x,w_x^r)$. From this~\eqref{eq:nested2} follows.
 \end{proof}
 
 Now we are ready to prove the main result of this section.  
 
 \begin{proposition}\label{prop-PI-weights}
  The metric measure space $(\R^n, d, \mu)$ is an Ahlfors $n$-regular space supporting a $(1,1)$-Poincar\'e 
  inequality.
 \end{proposition}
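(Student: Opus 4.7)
The Ahlfors $n$-regularity of $(\R^n,d,\mu)$ is exactly Lemma~\ref{Ahlfors}, so the task reduces to establishing the $(1,1)$-Poincar\'e inequality. The plan is first to invoke Keith's characterization~\cite{Ke}: in a complete doubling metric measure space, a $(1,1)$-Poincar\'e inequality follows from its verification on the restricted class of pairs $(u,g)$ with $u$ Lipschitz in $d$ and $g$ a continuous upper gradient of $u$ in $d$. Fix such a pair. Lemma~\ref{App1} places $u$ in the Euclidean Sobolev space $W^{1,n}_{\loc}(\R^n)$, and Lemma~\ref{App2} supplies the pointwise bound $|\nabla u(x)|\le C\,\omega(x)^{1/n} g(x)$ for almost every $x$.

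Next, fix a $d$-ball $B^d(x,r)$ and, using Lemma~\ref{App3}, enclose it in a Euclidean ball $B = B(x,C\lambda_x^r r)$ with $\mu(B)\approx\mu(B^d(x,r))\approx r^n$. The classical unweighted Euclidean $(1,1)$-Poincar\'e inequality applied to $u$ on $B$, combined with the pointwise gradient bound, yields the key Euclidean estimate
\[
  \int_B |u-u_B^{\text{Leb}}|\,dx \le C\,\text{rad}(B)\int_B\omega^{1/n} g\,dx.
\]

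The principal obstacle is to upgrade this Lebesgue-weighted Euclidean estimate to the target $\mu$-weighted $d$-metric form
\[
  \barint_{B^d(x,r)} |u-u_{B^d(x,r)}^\mu|\,d\mu \le C\,r \barint_{\Lambda B^d(x,r)} g\,d\mu.
\]
The plan is to combine the Riesz-potential representation $|u(y)-u_B^{\text{Leb}}|\le C\int_B |\nabla u(z)|\,|y-z|^{1-n}\,dz$ with Hedberg's inequality, producing the pointwise bound $|u(y)-u_B^{\text{Leb}}|\le C\,\text{rad}(B)\,M(\omega^{1/n}g)(y)$ in terms of the Euclidean Hardy--Littlewood maximal operator $M$. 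Integrating against $d\mu=\omega\,dx$ and invoking the Muckenhoupt weighted maximal inequality (applicable since every strong $A_\infty$-weight lies in some $A_p$) then controls the right-hand side by a weighted norm of $g$. The conformal-type comparability~\eqref{eq:1A} that is the defining feature of the strong $A_\infty$-hypothesis, together with the identity $r\approx\mu(B^d(x,r))^{1/n}$ from Lemma~\ref{Ahlfors}, is what prevents any loss of Lebesgue exponent in this passage and pins the scaling down to $r$ rather than $\text{rad}(B)$; the dilation $\Lambda$ is absorbed when returning from $B$ to a slightly enlarged $d$-ball via Lemma~\ref{App3}.
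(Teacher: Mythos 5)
Your overall architecture matches the paper's: Ahlfors regularity is Lemma~\ref{Ahlfors}, Keith's reduction to $d$-Lipschitz $u$ with continuous upper gradient $g$ is the correct first step, and Lemmas~\ref{App1}, \ref{App2} and \ref{App3} are used exactly as you use them. The difference, and the gap, is in the core analytic estimate. The paper does not derive the weighted inequality from the classical unweighted Euclidean Poincar\'e inequality; it imports the David--Semmes Sobolev--Poincar\'e inequality \cite[Inequality~(1.10)]{DS2} for strong $A_\infty$-weights,
\[
  \int_{B}\barint_{B}|u(x)-u(y)|\,d\mu(x)\,d\mu(y)\le C\,\mu(B)^{1/n}\int_{2B}\omega^{-1/n}|\nabla u|\,d\mu,
\]
into which Lemma~\ref{App2} plugs directly to give $\omega^{-1/n}|\nabla u|\le Cg$. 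That inequality is a genuine theorem about strong $A_\infty$-weights (it is where the ``strong'' hypothesis does real work, beyond merely defining $d$), and your proposal attempts to reprove it by Riesz potential $+$ Hedberg $+$ Muckenhoupt. That chain does not close.

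Two concrete failures. First, the Muckenhoupt weighted maximal theorem bounds $M$ on $L^p(\omega)$ only for $p>1$ and $\omega\in A_p$; for the $(1,1)$-inequality you need an $L^1(\omega)$ bound $\int_B M(\omega^{1/n}g\chi_B)\,\omega\,dy\lesssim(\cdots)\int g\,\omega\,dy$, and $M$ is unbounded on $L^1(\omega)$ unless $\omega\in A_1$ --- precisely the hypothesis the paper warns cannot be assumed for strong $A_\infty$-weights. Second, even ignoring the maximal function, the passage from the Euclidean scale $\operatorname{rad}(B)$ to the metric scale $r\approx\mu(B)^{1/n}$ is the entire difficulty: after Fubini, what you need is essentially the pointwise bound $\int_B|y-z|^{1-n}\,d\mu(y)\lesssim\mu(B)^{1/n}\,\omega(z)^{1-1/n}$ for a.e.\ $z\in B$, and this does not follow from \eqref{eq:1A} applied at a single scale nor from ``no loss of Lebesgue exponent''; it requires summing \eqref{eq:1A} over all dyadic scales and using the quasi-metric structure of $\delta(x,y)=\mu(B_{x,y})^{1/n}$, which is exactly the content of the David--Semmes argument. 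Your sentence asserting that the conformal-type comparability ``pins the scaling down to $r$'' is the step that needs a proof, and as written it is missing. To repair the argument you should either cite \cite[(1.10)]{DS2} as the paper does, or reproduce its chaining proof in full.
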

 
 \begin{proof}
  As pointed out by~\cite{Ke}, it suffices to prove the inequality for functions $u$ that are Lipschitz continuous
  on $(\R^n,d)$ with continuous upper gradient $g$. By~\cite[Inequality~(1.10)]{DS2}, we know that when $x\in\R^n$
  and $r>0$,  
  \begin{align*}
    \int_{B(x,C\lambda_x^r r)}&\barint_{B(x,C\lambda_x^rr)}|u(x)-u(y)|\, d\mu(x)\, d\mu(y)\\
       &\le C\mu(B(x,C\lambda_x^r))^{1/n}\int_{B(x,2C\lambda_x^r\, r)}\omega(x)^{-1/n}\, |\nabla u(x)|\, d\mu(x).
  \end{align*}
  By the doubling property of $\mu$ on the Euclidean $\R^n$, Lemma~\ref{App2}, and Lemma~\ref{Ahlfors}, we see that
   \begin{align*}
    \int_{B(x,C\lambda_x^r\, r)}&\barint_{B(x,C\lambda_x^r r)}|u(x)-u(y)|\, d\mu(x)\, d\mu(y)\\
       &\le C\mu(B(x,C\lambda_x^r r))^{1/n}\, \int_{B(x,2C\lambda_x^r r)}g(x) \, d\mu(x)\\
       &\le C\mu(B(x,\lambda_x^rr))^{1/n}\, \int_{B(x,2C\lambda_x^r r)}g(x) \, d\mu(x)\\
       &\le C\mu(B^d(x,r))^{1/n}\int_{B(x,2C\lambda_x^r r)}g(x) \, d\mu(x)\\
       &\le Cr\int_{B(x,2C\lambda_x^r r)}g(x) \, d\mu(x).
  \end{align*}
  Hence,
  \begin{align*}
    \inf_{c\in\R}\int_{B^d(x,r)}|u-c|\, d\mu 
           &\le \int_{B^d(x,r)}|u-u_{B(x,C\lambda_x^r r)}|\, d\mu\\
           &\le \int_{B(x,C\lambda_x^r r)}|u-u_{B(x,C\lambda_x^r r)}|\, d\mu\\
           &\le C \int_{B(x,C\lambda_x^r r)}|u-u_{B(x,C\lambda_x^r r)}|\, d\mu\\
           &\le C  \int_{B(x,C\lambda_x^r\, r)}\barint_{B(x,C\lambda_x^r r)}|u(x)-u(y)|\, d\mu(x)\, d\mu(y)\\  
           &\le Cr\int_{B(x,2C\lambda_x^r r)}g(x) \, d\mu(x).
  \end{align*}
  By Lemma~\ref{App3} we have 
  \[
  B(x,2C\lambda_x^r r)\subset B^d(x,2C^2\tau_x^{2C\lambda_x^r r}\, \lambda_x^rr).
  \]
  Note that 
  $B^d(x,\tau_x^{2C\lambda_x^r r}\, 2C\lambda_x^r\, r)$ is the largest metric ball centered at $x$ that fits inside the
  Euclidean ball $B(x,2C\lambda_x^r r)$. Let $\rho>0$ such that $B^d(x,\rho)$ is the largest metric ball centered at $x$ and
  contained in the Euclidean ball $B(x,\lambda_x^rr)$, and let $y_1\in\partial B^d(x,\rho)\cap\partial B(x,\lambda_x^rr)$,
  and correspondingly let 
  $y_2\in\partial B^d(x,\tau_x^{2C\lambda_x^rr}\, 2C\lambda_x^r r)\cap\partial B(x,2C\lambda_x^r r)$. Then
  by~(\ref{eq:1A}),
  \[
    \rho=d(x,y_1)\approx\mu(B(x,\lambda_x^r r))^{1/n}
    \]
    and again by~(\ref{eq:1A}),  
    \[
    \tau_x^{2C\lambda_x^rr}\, 2C\lambda_x^r r=d(x,y_2)\approx\mu(B(x,2C\lambda_x^r r))^{1/n}.  
  \]
  By the doubling property of $\mu$ in the Euclidean space $\R^n$, we see that 
  \[
     \mu(B(x,\lambda_x^r r))\approx \mu(B(x, 2C\lambda_x^rr)).  
  \]
  It follows that $\rho\approx \tau_x^{2C\lambda_x^rr}\, 2C\lambda_x^rr$. On the other hand, since $B^d(x,\rho)$ is
  the largest metric ball centered at $x$ and fitting inside the Euclidean ball $B(x,\lambda_x^r r)$, and by
  the construction of $\lambda_x^r$ from Lemma~\ref{App3} we know that $B(x,\lambda_x^r r)\subset B^d(x,r)$, we
  can conclude that $\rho\le r$. Hence 
  \[
     \tau_x^{2C\lambda_x^r r}2C\lambda_x^r r\le C \rho\le C_2 r.
  \]
  Thus we have
  \[
     B(x,2C\lambda_x^r r)\subset B^d(x,2C^2\tau_x^{2C\lambda_x^r r}\, \lambda_x^r r)\subset B^d(x,C_2 r),
  \]
  from which we conclude that
  \[
  \begin{split}
         \inf_{c\in\R}\int_{B^d(x,r)}|u-c|\, d\mu
         &\le Cr \int_{B(x,2C\lambda_x^r r)}g(x) \, d\mu(x) \\
           & \le Cr\int_{B^d(x,C_2 r)}g\, d\mu,
 \end{split}
  \]
  which is equivalent to the $(1,1)$-Poincar\'e inequality on $(\R^n,d,\mu)$ because the constants $C$, $C_2$ are 
  independent of $x,r,u,g$.
 \end{proof}

\section{Rectifiability of quasiminimal surfaces in Euclidean spaces with strong $A_\infty$ weights}

In this section we apply the results from the earlier part of this paper (from the first three sections) to the setting
of weighted Euclidean spaces where the weight $\omega$ is a strong $A_\infty$-weight.

If $\omega$ is an $A_1$-weight, then the space $(\R^n, |\cdot|,\mu)$ is a doubling metric measure space supporting a
$(1,1)$-Poincar\'e inequality. However, not all strong $A_\infty$-weights are $A_1$-weights, but 
as shown in the previous section, 
the metric measure space $(\R^n, d, \mu)$ is also an Ahlfors $n$-regular space supporting a $(1,1)$-Poincar\'e inequality. In 
this section we will prove that a set $E\subset\R^n$ that has a locally quasiminimal boundary surface in 
$(\R^n,d,\mu)$ will have a rectifiable boundary. Here of course, the notion of rectifiability is in terms of the  
Euclidean metric. As in~\cite[page~204, Definition~15.3]{Mat}, we say that a set $A\subset\R^n$ is 
$m$-rectifiable if there is a countable collection $\{f_i\}$ of Euclidean Lipschitz maps $f_i:\R^m\to\R^n$ such that 
$\mathcal{H}^m_{Euc}(A\setminus \bigcup_if_i(\R^m))=0$. In this section, we consider the issue of whether 
the boundary $\partial E$ of the set with quasiminimal boundary surface in $(\R^n,d,\mu)$ is $(n-1)$-rectifiable 
in the above sense (with respect to the Euclidean metric). We also recall that a set $K\subset\R^n$ is 
purely $m$-unrectifiable if whenever $A\subset\R^n$ is $m$-rectifiable, we have 
$\mathcal{H}^m_{Euc}(K\cap A)=0$. 
 
 Let $E\subset\Omega\subset \R^n$ be a set of finite perimeter with locally quasiminimal 
 boundary surface with respect to the metric $d$ and
 measure $\mu$. Then the results obtained in the previous sections of this note apply to $E$. 
 So we may assume that $E=\text{int}(\overline{E})$.
 
 \begin{lemma}\label{Tech1}
    Let $\Lambda>0$ and $A_\Lambda\subset\partial E$ be  such that for all $x\in A_\Lambda$ we have  
 \[
     \limsup_{r\to 0^+}\frac{\mu(B(x,r))}{r^n}\ge\Lambda.
 \] 
 Then 
 \[
 \mathcal{H}^{n-1}_{Euc}(A_\Lambda)\le \frac{C}{\Lambda^{(n-1)/n}}\, P(E,\Omega).
 \]
 \end{lemma}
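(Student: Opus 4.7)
The plan is to bound $\mathcal{H}^{n-1}_{Euc}(A_\Lambda)$ by a Vitali-type covering argument with Euclidean balls on which the $\limsup$ hypothesis forces a uniform lower density bound, and then to trade each Euclidean ball for a comparable $d$-metric ball on which Lemma~\ref{upperGrowth} (applied to $E$ in the space $(\R^n,d,\mu)$) furnishes a lower perimeter bound. The bridge between the two worlds is provided by Lemma~\ref{App3} together with the Ahlfors $n$-regularity of Lemma~\ref{Ahlfors}.

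Fix $\delta>0$ and, after localizing $A_\Lambda$ to an exhaustion by compact subsets of $\Omega$ so that the smallness hypotheses of Lemma~\ref{upperGrowth} are uniformly satisfied, use the $\limsup$ condition to produce, for each $x\in A_\Lambda$, a radius $r_x\in(0,\delta)$ with
\[
  \mu(B(x,r_x))\geq \tfrac{\Lambda}{2}\,r_x^n \qquad\text{and}\qquad B(x,r_x)\subset\Omega.
\]
The standard $5r$-covering lemma extracts a pairwise disjoint subfamily $\{B(x_i,r_i)\}$ of such balls with $A_\Lambda\subset\bigcup_i B(x_i,5r_i)$, so that
\[
  \mathcal{H}^{n-1}_{10\delta,Euc}(A_\Lambda)\leq C\sum_i r_i^{n-1}.
\]

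For each $i$ let $s_i>0$ be chosen so that $B^d(x_i,s_i)$ is the largest $d$-ball centered at $x_i$ contained inside $B(x_i,r_i)$; by Lemma~\ref{App3} one has $B(x_i,r_i)\subset B^d(x_i,Cs_i)$. Ahlfors $n$-regularity in the $d$-metric then gives $\mu(B(x_i,r_i))\leq\mu(B^d(x_i,Cs_i))\leq Cs_i^n$, which combined with the density lower bound $\mu(B(x_i,r_i))\geq (\Lambda/2)r_i^n$ yields
\[
 r_i^{n-1}\leq C\,\Lambda^{-(n-1)/n}\,s_i^{n-1}.
\]
By Proposition~\ref{prop-PI-weights} the structure $(\R^n,d,\mu)$ is Ahlfors $n$-regular and supports a $(1,1)$-Poincar\'e inequality, so Lemma~\ref{upperGrowth} applies in this space and produces
\[
  P(E,B^d(x_i,s_i))\geq C^{-1}\,\frac{\mu(B^d(x_i,s_i))}{s_i}\geq C^{-1}\,s_i^{n-1},
\]
where the last step uses Ahlfors regularity once more. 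Since the metric balls $B^d(x_i,s_i)$ are pairwise disjoint (as subsets of the pairwise disjoint Euclidean balls $B(x_i,r_i)$) and lie inside $\Omega$, summing and using the previous display gives
\[
  \sum_i r_i^{n-1}\leq C\,\Lambda^{-(n-1)/n}\sum_i P(E,B^d(x_i,s_i))\leq C\,\Lambda^{-(n-1)/n}\,P(E,\Omega).
\]
Letting $\delta\to 0^+$ and passing the exhaustion to the limit delivers the bound on $\mathcal{H}^{n-1}_{Euc}(A_\Lambda)$.

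The main obstacle is the comparison between Euclidean and $d$-metric balls: the density hypothesis $\mu(B(x_i,r_i))/r_i^n\geq\Lambda/2$ is a Euclidean statement while the perimeter lower bound intrinsically lives in the metric $d$. Lemma~\ref{App3}, together with Ahlfors $n$-regularity of $(\R^n,d,\mu)$, is precisely the tool that forces $\mu(B(x_i,r_i))^{1/n}$, $s_i$ and $r_i$ to be comparable up to the explicit $\Lambda^{-1/n}$ factor, and this is what preserves the sharp quantitative dependence on $\Lambda$ in the final inequality.
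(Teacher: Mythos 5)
Your argument is correct and follows essentially the same route as the paper's proof: a Vitali-type covering of $A_\Lambda$ by small balls on which the density hypothesis holds, conversion of each Euclidean ball to a comparable pairwise-disjoint $d$-ball via \eqref{eq:1A}/Lemma~\ref{App3} and Ahlfors regularity (which is where the $\Lambda^{-(n-1)/n}$ factor arises), the lower perimeter bound of Lemma~\ref{upperGrowth} on each $d$-ball, and summation using disjointness. The only cosmetic difference is that the paper runs the $5r$-covering lemma on the metric balls rather than the Euclidean ones and keeps the constant $\Lambda-\delta$ instead of $\Lambda/2$; neither affects the result.
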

 
 Note that in the above limes supremum condition, if we replace $B(x,r)$ with $B^d(x,r)$, then by
 Lemma~\ref{Ahlfors} we have $A_\Lambda$ to be empty whenever $\Lambda>C$.
 
 \begin{proof}
   Fix $\delta>0$; then by the condition imposed on $A_\Lambda$, for every $x\in A_\Lambda$ we can
 find $0<r_x<\delta/5$ such that 
 \[
 \mu(B(x,r_x))\ge (\Lambda-\delta)r_x^n.
 \]  
 
 Note also that if $x\in A_\Lambda$ and $r>0$ such that 
 \[
 \mu(B(x,r))\ge (\Lambda-\delta) r^n,
 \] 
 then for any
 $y\in\partial B(x,r)$ we have that
 \[
   (\Lambda-\delta)r^n\le \mu(B(x,r))=\mu(B(x,|x-y|))\le Cd(x,y)^n,
 \]
 and so we have $d(x,y)\ge C^{-1/n}\, (\Lambda-\delta)^{1/n}\, r$. It follows that
 \[
    B^d(x,\tfrac{(\Lambda-\delta)^{1/n}}{C^{1/n}}\, r)\subset B(x,r).
 \]
 
 The family $B^d(x,\tfrac{(\Lambda-\delta)^{1/n}}{5C^{1/n}}\, r_x)$, $x\in A_\Lambda$,
 forms a cover of $A_\Lambda$, and hence we can find a pairwise disjoint countable subfamily
 \[
 \{B^d(x_i,\tfrac{(\Lambda-\delta)^{1/n}}{5C^{1/n}}\, r_i)\}_i
 \]  
 such that 
 \[
 A_\Lambda\subset\bigcup_i B^d(x_i,\tfrac{(\Lambda-\delta)^{1/n}}{C^{1/n}} r_i). 
 \]
 Hence by Lemma~\ref{Ahlfors}
 and the fact that the balls $\{B(x_i,r_i)\}_i$ therefore also form a cover of $A_\Lambda$ by Euclidean balls,  
 \begin{align*}
   \mathcal{H}_{Euc,\delta}^{n-1}(A_\Lambda)
        &\le \sum_i (r_i)^{n-1}  = \sum_i\frac{r_i^n}{r_i}\\
       & \le \frac{C}{\Lambda-\delta}\sum_i\frac{\mu(B^d(x_i,\tfrac{(\Lambda-\delta)^{1/n}}{C^{1/n}}r_i))}{r_i}\\
       &\le \frac{C}{\Lambda-\delta} \sum_i\frac{\mu(B^d(x_i,\tfrac{(\Lambda-\delta)^{1/n}}{5C^{1/n}} r_i))}{r_i}\\
       &\le \frac{C}{(\Lambda-\delta)^{1-\tfrac1n}}
               \sum_i\frac{\mu(B^d(x_i,\tfrac{(\Lambda-\delta)^{1/n}}{5C^{1/n}} r_i))}{\tfrac{(\Lambda-\delta)^{1/n}}{5C^{1/n}} r_i}.
 \end{align*}
By Lemma~\ref{upperGrowth}, we now have
\[
   \mathcal{H}_{Euc,\delta}^{n-1}(A_\Lambda)\le 
         \frac{C}{(\Lambda-\delta)^{1-\tfrac1n}}\, \sum_i P\left(E,B^d(x_i,\tfrac{(\Lambda+\delta)^{1/n}}{5C^{1/n}}\, r_i)\right).
\]
Since the family $\{B^d(x_i,\tfrac{(\Lambda+\delta)^{1/n}}{5C^{1/n}}\, r_i)\}_i$ is pairwise disjoint, we see that
\[
   \mathcal{H}_{Euc,\delta}^{n-1}(A_\Lambda)\le\frac{C}{(\Lambda-\delta)^{1-\tfrac1n}}\, P(E,\Omega).
\]
Letting $\delta\to 0$ completes the proof.
 \end{proof}
 
 \begin{lemma}\label{finite-density}
   For $\mathcal{H}^{n-1}_{Euc}$-almost every $x\in\partial E$ we have
   \[
      \limsup_{r\to 0}\frac{\mu(B(x,r))}{r^n}<\infty.
   \]
 \end{lemma}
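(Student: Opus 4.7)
The plan is a direct application of Lemma~\ref{Tech1} together with letting the density threshold $\Lambda$ tend to infinity. I would define the ``bad set''
\[
A_\infty := \Bigl\{ x \in \partial E : \limsup_{r \to 0^+} \frac{\mu(B(x,r))}{r^n} = \infty \Bigr\}
\]
and aim to show that $\mathcal{H}^{n-1}_{Euc}(A_\infty) = 0$. The key observation is simply that $A_\infty \subset A_\Lambda$ for \emph{every} finite $\Lambda > 0$, where $A_\Lambda$ is the set from Lemma~\ref{Tech1}.

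Since $E$ was assumed at the start of this section to be a set of finite perimeter, we have $P(E,\Omega) \le P(E,\R^n) < \infty$, so Lemma~\ref{Tech1} applied to $A_\Lambda$ yields
\[
\mathcal{H}^{n-1}_{Euc}(A_\infty) \;\le\; \mathcal{H}^{n-1}_{Euc}(A_\Lambda) \;\le\; \frac{C}{\Lambda^{(n-1)/n}}\, P(E,\Omega).
\]
Since $n \ge 2$ is the standing assumption (the case $n=1$ being vacuous as $(n-1)/n$ would be $0$ but also $\partial E$ is at most countable), the exponent $(n-1)/n$ is strictly positive, so letting $\Lambda \to \infty$ forces $\mathcal{H}^{n-1}_{Euc}(A_\infty) = 0$, which is exactly the desired conclusion.

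No genuine obstacle stands in the way of this argument: the heavy lifting has already been done in Lemma~\ref{Tech1}, where a Vitali covering is executed using the inclusion $B^d(x,\tfrac{(\Lambda-\delta)^{1/n}}{C^{1/n}}r) \subset B(x,r)$ together with the upper growth bound from Lemma~\ref{upperGrowth} and the Ahlfors regularity of $(\R^n,d,\mu)$ from Lemma~\ref{Ahlfors}. The only conceptual point is recognising that the estimate furnished by Lemma~\ref{Tech1} decays as $\Lambda \to \infty$, which allows one to pass from a quantitative bound on the $\Lambda$-bad set to vanishing of the fully bad set. If one wished to weaken the global finite perimeter hypothesis to merely local finite perimeter, one would simply exhaust $\Omega$ by compactly contained open subsets $\Omega_k \Subset \Omega$ with $P(E,\Omega_k) < \infty$, apply the above to each $\Omega_k$, and take a countable union of the null sets so produced.
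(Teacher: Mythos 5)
Your argument is correct and is essentially identical to the paper's own proof: the paper also observes that the set where the limsup is infinite equals $\bigcap_{\Lambda>0}A_\Lambda$ and lets the bound $C\,\Lambda^{-(n-1)/n}P(E,\Omega)$ from Lemma~\ref{Tech1} tend to zero as $\Lambda\to\infty$. Your extra remarks on $n\ge 2$ and on localizing the finite perimeter assumption are harmless elaborations of the same argument.
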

 
 \begin{proof}
  By Lemma~\ref{Tech1}, we know that 
  \[
  \mathcal{H}^{n-1}_{Euc}(A_\Lambda)\le C\, \Lambda^{-(n-1)/n}P(E,\Omega).
  \]
  Since the set of all points $x\in \partial E$ for which 
  \[
  \limsup_{r\to 0}\frac{\mu(B(x,r))}{r^n}=\infty
  \] 
  is the set $\bigcap_{\Lambda>0}A_\Lambda$, we see that the claim of the lemma holds true.
 \end{proof}
 
 We set $F_0$ to be the collection of all points $x\in\partial E$ for which
 \[
     \limsup_{r\to 0^+}\frac{\mu(B(x,r))}{r^n}=0.
 \]
 Let $Z=\bigcap_{\Lambda>0}A_\Lambda$; from the above discussion we know that $\mathcal{H}_{Euc}^{n-1}(Z)$
 is zero.
 
 \begin{lemma}\label{partition}
   We have that $F_\infty=\bigcup_{k\in\N}(A_{1/k}\setminus Z)$ is $\sigma$-finite with respect to the 
   measure $\mathcal{H}^{n-1}_{Euc}$, and
 \[
    \partial E = Z\cup F_0\cup F_\infty.
 \]
 \end{lemma}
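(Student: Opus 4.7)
The plan is to prove the two assertions by directly unpacking the definitions in terms of the value of the density quantity
\[
L(x):=\limsup_{r\to 0^+}\frac{\mu(B(x,r))}{r^n}\in[0,\infty],
\]
defined for $x\in\partial E$, and then invoking Lemma~\ref{Tech1} for the measure estimate on the sets $A_\Lambda$.

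For the partition identity, I would argue by cases on $L(x)$. If $L(x)=\infty$, then $x\in A_\Lambda$ for every $\Lambda>0$, hence $x\in Z$. If $L(x)=0$, then $x\in F_0$ by definition. In the remaining case $0<L(x)<\infty$, choose $k\in\N$ with $L(x)\geq 1/k$, so that $x\in A_{1/k}$; on the other hand, any $\Lambda>L(x)$ gives $x\notin A_\Lambda$, so $x\notin Z$. Thus $x\in A_{1/k}\setminus Z\subset F_\infty$. Since these three cases are exhaustive (and mutually exclusive), the equality $\partial E=Z\cup F_0\cup F_\infty$ follows. The reverse inclusion is automatic since each of the three sets is defined as a subset of $\partial E$.

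For the $\sigma$-finiteness of $F_\infty$, I would apply Lemma~\ref{Tech1} with $\Lambda=1/k$. Since $E$ is assumed to have finite perimeter in $\Omega$, we have $P(E,\Omega)<\infty$, so
\[
\mathcal{H}^{n-1}_{Euc}(A_{1/k}\setminus Z)\;\leq\;\mathcal{H}^{n-1}_{Euc}(A_{1/k})\;\leq\;C\,k^{(n-1)/n}\,P(E,\Omega)\;<\;\infty
\]
for every $k\in\N$. Therefore $F_\infty=\bigcup_{k\in\N}(A_{1/k}\setminus Z)$ is a countable union of sets of finite $\mathcal{H}^{n-1}_{Euc}$-measure, hence $\sigma$-finite with respect to $\mathcal{H}^{n-1}_{Euc}$.

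There is no real obstacle here; the substantive work has already been done in Lemma~\ref{Tech1}, and the present lemma is essentially a bookkeeping statement that packages the earlier estimate in a form suitable for the subsequent rectifiability analysis (namely, isolating the negligible set $Z$ on which the density blows up, the set $F_0$ of vanishing density, and the $\sigma$-finite part $F_\infty$ on which upper density estimates can be used uniformly on the pieces $A_{1/k}\setminus Z$).
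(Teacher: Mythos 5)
Your proof is correct and follows essentially the same route as the paper: the $\sigma$-finiteness is exactly the paper's argument (apply Lemma~\ref{Tech1} with $\Lambda=1/k$ and use $P(E,\Omega)<\infty$), and the paper leaves the partition identity implicit as the trivial trichotomy on $\limsup_{r\to 0^+}\mu(B(x,r))/r^n$ that you spell out.
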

 
 \begin{proof}
  By Lemma~\ref{Tech1}, we know that
  $\mathcal{H}^{n-1}_{Euc}(A_{1/k})<\infty$. Thus we see that $F_\infty$ is $\sigma$-finite
  with respect to the measure $\mathcal{H}^{n-1}_{Euc}$.
 \end{proof}
 
 \begin{lemma}\label{Tech2}
  Either $\mathcal{H}(F_0)=0$ or $\mathcal{H}^{n-1}_{Euc}(F_0)=\infty$. Furthermore, 
  with 
  \[
  K_\eps=\left\{ x\in\partial E\, :\, \limsup_{r\to 0^+}\frac{\mu(B(x,r))}{r^n}<\eps\right\},
  \]
  we have
  \[
    \mathcal{H}\left(K_\eps\right)  \le C\, \eps^{(n-1)/n}\, \mathcal{H}^{n-1}_{Euc}\left(K_\eps\right).
  \]
 \end{lemma}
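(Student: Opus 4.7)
The plan is to prove the quantitative inequality first, and deduce the dichotomy by letting $\eps\to 0^+$. The central observation is a reversal of the inclusion used in Lemma~\ref{Tech1}: for $x\in K_\eps$, the limsup condition furnishes an $r_x^0>0$ such that $\mu(B(x,r))<\eps r^n$ for all $0<r<r_x^0$, and then for any $y$ with $|x-y|=r<r_x^0$, estimate~\eqref{eq:1A} gives
\[
  d(x,y)^n\le C\,\mu(B(x,|x-y|))=C\,\mu(B(x,r))<C\eps r^n,
\]
so that
\[
  B(x,r)\subset B^d\bigl(x,\,C_0\eps^{1/n}r\bigr).
\]
In words, at small scales a Euclidean ball centered on $K_\eps$ sits inside a $d$-ball whose radius is shrunk by the factor $\eps^{1/n}$. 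This is exactly the opposite of the inclusion exploited in Lemma~\ref{Tech1}, and it is what allows a small Euclidean covering to be transferred into a small $d$-covering.

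Next I would fix $\delta>0$ and choose a countable cover $\{B(x_i,r_i)\}_i$ of $K_\eps$ by Euclidean balls with centers $x_i\in K_\eps$ and radii $r_i<\min(\delta,\,r_{x_i}^0)$, such that $\sum_i r_i^{n-1}$ is within a fixed constant of $\mathcal{H}^{n-1}_{Euc,\delta}(K_\eps)$. The reverse inclusion above immediately produces a $d$-cover $\{B^d(x_i,\,C_0\eps^{1/n}r_i)\}_i$ of $K_\eps$ whose $d$-diameters are at most $2C_0\eps^{1/n}r_i$, and hence
\[
  \mathcal{H}^{n-1}_{d,\,2C_0\eps^{1/n}\delta}(K_\eps)\le\sum_i(2C_0\eps^{1/n}r_i)^{n-1}\le C\,\eps^{(n-1)/n}\,\mathcal{H}^{n-1}_{Euc,\delta}(K_\eps).
\]
Passing to the limit $\delta\to 0^+$, which forces $2C_0\eps^{1/n}\delta\to 0^+$ as well, yields the desired inequality $\mathcal{H}(K_\eps)\le C\eps^{(n-1)/n}\mathcal{H}^{n-1}_{Euc}(K_\eps)$.

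For the dichotomy, observe that every $x\in F_0$ satisfies $\mu(B(x,r))/r^n\to 0$ as $r\to 0^+$, so the density bound $\mu(B(x,r))<\eps r^n$ holds for all sufficiently small $r$ regardless of $\eps>0$. The preceding argument therefore applies verbatim to $F_0$ in place of $K_\eps$ for \emph{every} $\eps>0$, giving
\[
  \mathcal{H}(F_0)\le C\eps^{(n-1)/n}\mathcal{H}^{n-1}_{Euc}(F_0).
\]
If $\mathcal{H}^{n-1}_{Euc}(F_0)<\infty$, letting $\eps\to 0^+$ forces $\mathcal{H}(F_0)=0$; otherwise the second alternative of the dichotomy holds trivially. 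The only mild technical point in the entire argument is the comparison of ``centered'' Euclidean ball covers with the standard definition of $\mathcal{H}^{n-1}_{Euc}$; this is handled by replacing any covering set of Euclidean diameter $\rho$ meeting the set by a ball of radius $\rho$ centered at a point of the set, at worst doubling diameters, which only affects constants. The rest is a direct covering computation that transfers the density constraint at scales $r$ into a Hausdorff-measure bound via the scaling $r\mapsto C_0\eps^{1/n}r$.
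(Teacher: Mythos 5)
Your overall strategy coincides with the paper's: both rest on the reversed inclusion $B(x,r)\subset B^d(x,C\eps^{1/n}r)$, obtained from \eqref{eq:1A} once the density bound $\mu(B(x,r))<\eps r^n$ holds, followed by converting a near-optimal Euclidean cover into a $d$-cover and comparing the two Hausdorff (pre-)measures. You merely prove the two claims in the opposite order (the paper establishes the dichotomy directly and remarks that the quantitative bound follows by the same argument); your deduction of the dichotomy by sending $\eps\to0^+$ is the contrapositive of the paper's. Identifying $\mathcal{H}$ with the $(n-1)$-dimensional Hausdorff measure of the metric $d$ is harmless here, since by Lemma~\ref{Ahlfors} the paper's codimension-one gauge $\mu(B^d(x,\rho))/\rho$ is comparable to $\rho^{n-1}$.

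There is, however, a genuine gap in your covering step. You ask for a cover of $K_\eps$ by balls $B(x_i,r_i)$ with $x_i\in K_\eps$, with $r_i<\min(\delta,r_{x_i}^0)$, \emph{and} with $\sum_i r_i^{n-1}$ comparable to $\mathcal{H}^{n-1}_{Euc,\delta}(K_\eps)$. These requirements conflict: the threshold $r_x^0$ below which $\mu(B(x,r))<\eps r^n$ holds comes from a $\limsup$ condition and admits no uniform lower bound over $K_\eps$, so imposing $r_i<r_{x_i}^0$ restricts the admissible covers in a point-dependent way, and the infimum over this restricted class need not be comparable to the Hausdorff pre-measure (an efficient cover may be forced to use a ball of radius exceeding $r_{x}^0$ at its center). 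This is exactly why the paper stratifies: it sets $F_j=\{x\in F_0:\delta_x\ge 1/j\}$, takes $\delta<1/j$ so that \emph{every} radius below $\delta$ automatically satisfies the density bound at every center in $F_j$, proves the inequality for each $F_j$, and then passes to the limit via $\mathcal{H}(F_0)=\lim_j\mathcal{H}(F_j)$ for the increasing exhaustion. You need the analogous decomposition $K_\eps=\bigcup_j K_{\eps,j}$ with $K_{\eps,j}=\{x\in K_\eps:\ \mu(B(x,r))<\eps r^n \text{ for all } 0<r<1/j\}$ together with continuity from below of $\mathcal{H}$; once that is inserted, the rest of your computation, including the re-centering remark and the limit $\delta\to0^+$, goes through as written.
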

 
 Note that here $\mathcal{H}$ is the codimension $1$ Hausdorff measure with respect to the metric $d$ and measure $\mu$,
 while $\mathcal{H}^{n-1}_{Euc}$ is the $(n-1)$-dimensional Hausdorff measure with respect to the Lebesgue measure
 and Euclidean metric.
 
 \begin{proof}
   Suppose $\mathcal{H}(F_0)>0$. We will show that then $\mathcal{H}^{n-1}_{Euc}(F_0)=\infty$. To this end, 
 fix $\eps>0$.  For each $x\in F_0$ there is a positive number $\delta_x$ such that whenever $0<r<\delta_x$
 we have $\mu(B(x,r))\le 2\eps r^n$. For each $j\in\N$ let 
 \[
     F_j=\{x\in F_0\, :\, \delta_x\ge 1/j\}.
 \]
 Note that for large $j$ we have $F_j$ non-empty since $F_0=\bigcup_j F_j$ is non-empty. 
 Furthermore, because $\mathcal{H}(F_0)>0$ we have that $\mathcal{H}(F_j)>0$ for sufficiently large $j$.
 Let $0<\delta<1/j$, and for
 each $x\in F_j$, whenever $r<\delta$, we have that $\mu(B(x,r))\le 2\eps r^n$. For $y\in\partial B(x,r)$, we see by
 \eqref{eq:1A} that
 \[
   2\eps\, r^n\ge \mu(B(x,r))=\mu(B(x,|x-y|))\ge \frac{1}{C}\, d(x,y)^n.
 \]
 It follows that $d(x,y)\le C\eps^{1/n}\, r$ whenever $y\in\partial B(x,r)$. Therefore 
 $B(x,r)\subset B^d(x,C\eps^{1/n}r)$. Now we choose a countable cover of $F_j$ by balls $B(x_i,r_i)$ with
 $r_i<\delta$ and 
 \[
   \mathcal{H}^{n-1}_{Euc}(F_j)+\delta\ge \sum_i r_i^{n-1}.
 \]
 But then by Lemma~\ref{Ahlfors} we have 
 \begin{align*}
    \mathcal{H}^{n-1}_{Euc}(F_j)+\delta &\ge \frac{1}{C\eps}\sum_i\frac{\mu(B^d(x_i,C\eps^{1/n}r_i))}{r_i}\\
            &\ge \frac{1}{C\eps^{1-\tfrac{1}{n}}}\, \sum_i\frac{\mu(B^d(x_i,C\eps^{1/n}r_i))}{C\eps^{1/n}r_i}\\
            &\ge \frac{1}{C\eps^{1-\tfrac{1}{n}}}\, \mathcal{H}_{C\eps^{1/n}\delta}(F_j).
 \end{align*}
 Now letting $\delta\to 0$, we can conclude that 
 \[
    \mathcal{H}^{n-1}_{Euc}(F_0)\ge \mathcal{H}^{n-1}_{Euc}(F_j)\ge \frac{1}{C\eps^{1-\tfrac{1}{n}}}\, \mathcal{H}(F_j).
 \]
 Because $E$ is of finite perimeter and $E$ satisfies the density conditions discussed in the previous
 sections, we know that $\mathcal{H}(F_0)<\infty$. Also, if $j_1>j_2$ then $F_{j_1}\supset F_{j_2}$. Therefore we have
 $\mathcal{H}(F_0)=\lim_j\mathcal{H}(F_j)$. Therefore
 \[
       \mathcal{H}^{n-1}_{Euc}(F_0)\ge \frac{1}{C\eps^{(n-1)/n}}\, \mathcal{H}(F_0),
 \] 
 and now the desired conclusion that $\mathcal{H}^{n-1}_{Euc}(F_0)=\infty$ follows by taking $\eps\to 0$.
 The proof of the second claim of the lemma follows from an argument similar to the first part of the proof above.
 \end{proof}

Recall that 
\[
F_0:=\left\{x\in \partial E: \limsup_{r\rightarrow 0} \tfrac{\mu(B(x,r))}{r^n}=0\right\}.
\]

\begin{theorem}\label{main_rect_Euc}
With
$$
D_{\infty}:=\left\{x\in \partial E: \limsup_{r\rightarrow 0} \frac{\mathcal{H}_{Euc}^{n-1}(\partial E\cap B(x,r))}{r^{n-1}}=\infty\right\},
$$
the set $\partial E\backslash (F_0\cup D_{\infty})$ is $(n-1)$-rectifiable.    
 \end{theorem}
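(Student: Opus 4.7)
The plan is to decompose $\partial E\setminus(F_0\cup D_\infty)$ into countably many pieces of locally finite Euclidean $(n-1)$-Hausdorff measure, show that each piece lies in the Euclidean measure-theoretic boundary of a set of locally finite Euclidean perimeter, and invoke the classical rectifiability of the reduced boundary.

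For positive integers $k,j$, introduce
\[
N_{k,j}:=\Big\{x\in\partial E:\tfrac{\mathcal{H}^{n-1}_{Euc}(\partial E\cap B(x,r))}{r^{n-1}}\le k\ \text{for all}\ r\in(0,\tfrac1k),\ \limsup_{r\to 0^+}\tfrac{\mu(B(x,r))}{r^n}\ge\tfrac1j\Big\}.
\]
The definitions of $F_0$ and $D_\infty$ give $\partial E\setminus(F_0\cup D_\infty)=\bigcup_{k,j}N_{k,j}$. Given a compact $K\subset N_{k,j}$, a finite subcover by balls $B(x_i,1/k)$ with $x_i\in K$ produces an open set $U:=\bigcup_i B(x_i,1/k)$ with
\[
\mathcal{H}^{n-1}_{Euc}(\partial E\cap U)\le\sum_i\mathcal{H}^{n-1}_{Euc}(\partial E\cap B(x_i,\tfrac1k))<\infty.
\]

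The next step is to verify $\partial E\subset\partial^M_{Euc}E$, where $\partial^M_{Euc}E$ denotes the set of $x$ at which both $E$ and $\R^n\setminus E$ have positive upper Lebesgue density. For $x\in\partial E$ and small $s>0$, Lemma~\ref{App3} supplies $\tau_x^s$ with $B^d(x,\tau_x^s s)\subset B(x,s)\subset B^d(x,C\tau_x^s s)$. Theorem~\ref{theorem:density} applied on the inner $d$-ball gives $\mu(B^d(x,\tau_x^s s)\cap E)\ge\gamma_0\,\mu(B^d(x,\tau_x^s s))$; doubling of $\mu$ in the metric $d$ then upgrades this to $\mu(B(x,s)\cap E)\ge c_1\,\mu(B(x,s))$ with $c_1>0$ independent of $x,s$, and the analogous bound holds for $\R^n\setminus E$. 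Since $\omega$ is a strong $A_\infty$-weight and hence in particular $A_\infty$, these positive $\mu$-density bounds convert into positive Lebesgue density bounds, placing $x$ in $\partial^M_{Euc}E$.

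Combining the two steps, for each compact $K\subset N_{k,j}$ with corresponding $U$ we have $\mathcal{H}^{n-1}_{Euc}(\partial^M_{Euc}E\cap U)\le\mathcal{H}^{n-1}_{Euc}(\partial E\cap U)<\infty$. Federer's structure theorem for Euclidean $BV$ sets then implies that $E$ has locally finite Euclidean perimeter in $U$, and De Giorgi's rectifiability theorem yields that $\partial^\ast E\cap U$ is $(n-1)$-rectifiable with $\mathcal{H}^{n-1}_{Euc}((\partial^M_{Euc}E\setminus\partial^\ast E)\cap U)=0$. Since $K\subset\partial^M_{Euc}E$, $K$ is $(n-1)$-rectifiable up to an $\mathcal{H}^{n-1}_{Euc}$-null set; exhausting each $N_{k,j}$ by a countable family of such compacta and taking the countable union over $k,j$ finishes the proof. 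The principal difficulty lies in the density transfer: Theorem~\ref{theorem:density} only gives bounds for $d$-balls and $\mu$, whereas the Euclidean essential boundary is phrased in terms of Euclidean balls and Lebesgue measure, so Lemma~\ref{App3}, doubling in $(\R^n,d,\mu)$, and the $A_\infty$ self-improvement of $\omega$ must all be deployed together to convert the metric information into Euclidean density information uniformly.
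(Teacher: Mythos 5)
Your proposal is correct, but it takes a genuinely different route from the paper's. The paper argues by contradiction: assuming a purely $(n-1)$-unrectifiable subset $K\subset\partial E\setminus(F_0\cup D_\infty)$ of positive $\mathcal{H}^{n-1}_{Euc}$-measure, it selects a density point $x_0$ of $K$, uses the porosity theorem (Theorem~\ref{porous1}) together with $x_0\notin F_0$ to produce two Euclidean balls of radius comparable to $r$, one inside $E$ and one inside $\R^n\setminus E$, and then invokes the Besicovitch--Federer projection theorem to find a direction $v$ with $\mathcal{H}^{n-1}_{Euc}(P_{v^\perp}(K))=0$; the segments in direction $v$ joining the two balls must cross $\partial E\setminus K$ on a set of measure at least $c\,r^{n-1}$, contradicting \eqref{eqsd1}. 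You instead transfer the density estimate of Theorem~\ref{theorem:density} from metric balls to Euclidean balls (via Lemma~\ref{App3}, the Ahlfors regularity of Lemma~\ref{Ahlfors}, and the $A_\infty$ comparison $\mu(A)/\mu(B)\le C(|A|/|B|)^\theta$ applied with $A=E\cap B$), conclude that every point of $\partial E$ lies in the Euclidean essential boundary of $E$, and then hand the problem to Federer's criterion and the De Giorgi--Federer structure theorem on the open sets where $\mathcal{H}^{n-1}_{Euc}(\partial E)$ is finite. Both steps are sound, and your route buys a strictly stronger conclusion: the condition $x\notin F_0$ is never used, so your argument shows that all of $\partial E\setminus D_\infty$ is $(n-1)$-rectifiable, whereas the paper must exclude $F_0$ because it needs $\limsup_{r\to0}\mu(B(x_0,r))/r^n>0$ to convert the metric porosity balls into Euclidean balls of definite size. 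The price is that you import the full classical Euclidean structure theory as a black box, while the paper's proof is self-contained relative to the metric machinery (density and porosity) it has just developed. Two minor points to tighten: use radius $1/(2k)$ in the covering of $N_{k,j}$ so that the defining bound for $N_{k,j}$ actually applies to the covering balls, and note that the exhaustion by compacta is unnecessary, since a countable (Lindel\"of) cover of $N_{k,j}$ by such balls already yields the countable decomposition.
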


\begin{proof} 
Suppose not. Then combining Lemma~\ref{Tech1} with~\cite[page~205, Theorem~15.6]{Mat}, we know that 
there is  a purely $(n-1)$-unrectifiable set 
$K\subset  \partial E\backslash (F_0\cup D_{\infty})$ with $\mathcal{H}^{n-1}_{Euc}(K)>0$. Since $K\cap D_\infty$
is empty, it follows that $\mathcal{H}^{n-1}_{Euc}(K\cap B)$ is finite for balls $B$ centered at points in $K$ and with
sufficiently small radii. Thus there is a density point 
$x_0$ of $K$ for the measure $\mathcal{H}^{n-1}_{Euc}$.  By 
Lemma ~\ref{finite-density} we can assume without loss of generality that 
$$
Q:=\limsup_{r\rightarrow 0} \frac{\mu(B(x_0,r))}{r^n} <\infty.  
$$
Since $x_0\not\in F_0$, 
$$
\infty> Q=\limsup_{r\rightarrow 0} \frac{\mu(B(x_0,r))}{r^n} >0.
$$
Furthermore, because $x_0\not\in D_\infty$, we have 
\begin{equation*}
    M=\limsup_{r\rightarrow 0} \frac{\mathcal{H}_{Euc}^{n-1}(\partial E\cap B(x_0,r))}{r^{n-1}}<\infty.
\end{equation*}

Let $\epsilon$ be some small number to be determined later; by the choice of $x_0$, for sufficiently small $r_0>0$,
for all $0<r<r_0$ by the definition of $M$ above we have 
\begin{equation}\label{eq:extra2}
   \frac{\mathcal{H}^{n-1}_{Euc}((\partial E\backslash K)\cap B(x_0,r))}
       {\mathcal{H}^{n-1}_{Euc}(\partial E\cap B(x_0,r))}<\frac{\eps}{2M}.
\end{equation}
Since $x_0\not\in D_\infty$, for sufficiently small $r$ we also have 
\[
   \mathcal{H}^{n-1}_{Euc}(\partial E\cap B(x_0,r))\le (M+\eps) r^{n-1}.
\]
Therefore, by~\eqref{eq:extra2}, for sufficiently small $r>0$,
\begin{equation}
\label{eqsd1}
  \mathcal{H}^{n-1}_{Euc}((\partial E\backslash K)\cap B(x_0,r))<\epsilon r^{n-1}.
\end{equation}
We can find a small positive number $r>0$ that satisfies the above requirements and in addition satisfies
\[
   \mu(B(x_0,r))\geq \frac{Q}{2} r^n.
\]

By inequality~\eqref{eq:1A},
$$
B^d(x_0,C^{-1}\mu(B(x_0,r))^{\frac{1}{n}})\subset B(x_0,r)\subset B^d(x_0,C\mu(B(x_0,r))^{\frac{1}{n}}).
$$
So in particular, by the definition of $Q$ and the choice of $r$, we have that
$B^d(x_0,c Q^{\frac{1}{n}}r)\subset B(x_0,r)$ for $c=C^{-1}\, 2^{-1/n}$. 

Now by Theorem~\ref{porous1}, we can find $y_0\in E$ and $y_1\in E^c$ such that 
\[B^d(y_0,c_1 Q^{\frac{1}{n}}r)\subset E\cap B^d(x_0,c Q^{\frac{1}{n}}r) \subset B(x_0,r)\] and 
\[B^d(y_1, c_1 Q^{\frac{1}{n}}r)\subset B^d(x_0,c Q^{\frac{1}{n}}r)\setminus E\subset B(x_0,r)\].

For $i=0,1$ let 
$$
\gamma_i=\inf\left\{h>0: B^d(y_i, c_1 Q^{\frac{1}{n}}r)\subset B(y_i,h)\right\},
$$
that is, $B(y_i,\gamma_i)$ is the smallest Euclidean ball containing the metric ball $B^d(y_i,c_1 Q^{1/n}r)$.
Note that by Lemma~\ref{Ahlfors},
\[
    \mu(B(y_i,\gamma_i))\geq \mu(B^d(y_i,c_1 Q^{1/n}r))\ge c Q\, r^n.
\]
Because $B^d(y_i,c_1 Q^{1/n}r)\subset B(x_0,r)$, it follows that $B^d(y_i,c_1 Q^{1/n}r)\subset B(y_i,2r)$, and so
$\gamma_i\le 2r$.

Since $B(x_0,r)\subset B(y_i,2r)$ we have by the doubling property of $\mu$, with respect to the Euclidean metric,
that $\mu(B(y_i,2r))\leq CQr^n$.  

As observed above, $\gamma_i\le 2r$, and so~\eqref{UMB1} applies here to give
$$
\frac{cQr^n}{CQr^n}\leq \frac{\mu(B(y_i,\gamma_i))}{\mu(B(y_i,2r))}\leq C \left(\frac{\gamma_i}{2 r}\right)^{Q_1}.
$$ 
Thus $\gamma_i\geq c\, r$ and so by Lemma~\ref{App3} we have that 
$$
B(y_i,c\, r)\subset B^d(y_i, c_1 Q^{\frac{1}{n}}r).
$$

As $K$ is purely unrectifiable, by the Besicovich-Federer Projection theorem (\cite[Theorem 18.1(2)]{Mat}), for
$i=0,1$
there must exist points $\ti{y}_i\in  B(y_i,c\,r/4) $ such that for $v=\frac{\ti{y}_1-\ti{y}_2}{\left|\ti{y}_1-\ti{y}_2\right|}$ we have 
$$
\mathcal{H}^{n-1}_{Euc}(P_{v^{\perp}}(K))=0.
$$ 
Here $P_{v^\perp}$ is the projection to the $(n-1)$-dimensional hyperplane orthogonal to the vector $v$.

Let $\xi_1=v$; then we can find unit vectors   
$\xi_2,\xi_3,\dots \xi_n$  
such that $\left\{\xi_1,\xi_2,\dots \xi_n\right\}$ 
forms an  orthonormal basis for the vector space $\R^n$. For any $z\in \R^n$ and $\beta>0$ let $Q_{\beta}(z)$ denote the cube whose faces are normal to the vectors $\left\{\xi_1,\xi_2,\dots \xi_n\right\}$, with Euclidean side length $\beta$, and
center located at $z$. Note 
$$
Q_{\tfrac{c\, r}{8\, n^{1/2}}}(\ti{y}_0)\subset E
\quad\text{and}\quad
Q_{\tfrac{c\, r}{8\, n^{1/2}}}(\ti{y}_1)\subset \R^n\setminus E.
$$
Consider the following cross-section of the cube $Q_{\tfrac{c\, r}{8\, n^{1/2}}}(\ti{y}_0)$,
$$
\Pi:=Q_{\tfrac{c\, r}{8\, n^{1/2}}}(\ti{y}_0)\cap \left( \langle v\rangle^{\perp}+\ti{y}_0\right),  
$$
where $\langle v\rangle$ is the one-dimensional vector subspace of $\R^n$ spanned by the vector $v$, and
$\langle v\rangle^{\perp}$ is the $(n-1)$-dimensional hyperplane orthogonal to the vector $v$. For each $z\in \Pi$ let 
$$
e_z= P_{v^{\perp}}^{-1}(\{P_{v^{\perp}}(z)\})\cap 
Q_{\tfrac{c\, r}{8\, n^{1/2}}}(\ti{y}_1)\cap \left(\langle v\rangle^{\perp}+\ti{y}_1\right),
$$
that is, $e_z$ is the point in the region 
\[
Q_{\tfrac{c r}{8\, n^{1/2}}}(\ti{y}_1)\cap \left(\langle v\rangle^{\perp}+\ti{y}_1\right)=\Pi+\ti{y}_1-\ti{y}_0
\] 
corresponding to
$z\in\Pi$ such that $z-e_z=\ti{y_0}-\ti{y_1}$.

Let 
$$
\Pi'=\left\{z\in \Pi:P_{v^{\perp}}^{-1}(P_{v^{\perp}}(z))\cap K=\emptyset\right\},
$$
that is, $\Pi^\prime$ is the collection of all points $z\in\Pi$ such that the line segment $[z,e_z]$
connecting $z$ to $e_z$ does \emph{not}
intersect the purely unrectifiable set $K$. By the choice of $\ti{y}_i$, $i=0,1$, we know that
$\mathcal{H}^{n-1}_{Euc}(\Pi\setminus \Pi')=0$. 
However, for any $z\in \Pi'$ we know that $z\in E$ and $e_z\in E^c$ so 
we must have the line segment $\left[z,e_z\right]$ intersect $\partial E\backslash K$. So for 
each $z\in \Pi'$ we can pick a point $b_z\in   (\partial E\backslash K)\cap [z,e_z]$. 
On the other hand, as orthogonal projections do not increase the measure $\mathcal{H}^{n-1}_{Euc}$,
\begin{align*}
\mathcal{H}^{n-1}_{Euc}(\partial E\setminus K)&\ge \mathcal{H}^{n-1}_{Euc}\left(\bigcup_{z\in \Pi'} b_z\right) 
     \geq \mathcal{H}^{n-1}_{Euc}\left(P_{v^{\perp}}\left( \bigcup_{z\in \Pi'} b_z \right)\right)\\
    & =\mathcal{H}^{n-1}_{Euc}(\Pi^\prime)
    = \left(\frac{c\, r}{8\, \sqrt{n}}\right)^{n-1}= \left(\frac{c}{8\, \sqrt{n}}\right)^{n-1}\, r^{n-1}.
\end{align*}
This contradicts~\eqref{eqsd1} when we choose $0<\eps<\left(\frac{c}{8\, \sqrt{n}}\right)^{n-1}$.

\end{proof}

 \begin{corollary}
   Suppose that there is a positive number $\alpha$ such that $\omega(x)\ge\alpha$ for $\mathcal{L}^n$-almost
 every $x$ in a neighborhood of $\overline{E}$. Then $\partial E$ has $\mathcal{H}^{n-1}_{Euc}$ finite measure and is Euclidean $(n-1)$-rectifiable.
 \end{corollary}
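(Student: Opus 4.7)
The plan is to invoke Theorem~\ref{main_rect_Euc} once we verify that the exceptional sets $F_0$ and $D_\infty$ are negligible, and to establish finiteness of $\mathcal{H}^{n-1}_{Euc}(\partial E)$ by transferring a $d$-cover of $\partial E$ to a Euclidean one. The engine is a one-sided metric comparison on a neighborhood $U$ of $\overline{E}$: by~\eqref{eq:1A} together with $\omega\geq\alpha$ on $U$,
\[
d(x,y)^n \;\ge\; \tfrac{1}{C}\mu(B(x,|x-y|)) \;\ge\; \tfrac{\alpha}{C}\,|B(x,|x-y|)| \;\gtrsim\; |x-y|^n
\]
for $x,y\in U$ close enough, so $d(x,y)\gtrsim|x-y|$ on $U$, and $\mu(B(x,r))\gtrsim r^n$ for small Euclidean balls centered in $U$.

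The second estimate immediately yields $\partial E\cap F_0=\emptyset$, since at every $x\in\partial E$ and every small $r>0$, $\mu(B(x,r))/r^n$ is bounded below by a positive constant. For $D_\infty$, my goal is to prove the Euclidean upper bound
\[
\mathcal{H}^{n-1}_{Euc}(\partial E\cap B(x_0,r))\;\le\;C\,\mu(B(x_0,r))^{(n-1)/n}
\]
for $x_0\in\partial E$ and small $r>0$. Dividing by $r^{n-1}$ this becomes $\mathcal{H}^{n-1}_{Euc}(\partial E\cap B(x_0,r))/r^{n-1}\le C(\mu(B(x_0,r))/r^n)^{(n-1)/n}$, so that $D_\infty\subset Z=\bigcap_\Lambda A_\Lambda$, and $\mathcal{H}^{n-1}_{Euc}(D_\infty)=0$ by Lemma~\ref{finite-density}. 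To prove the displayed bound, Lemma~\ref{App3} supplies $R\approx\mu(B(x_0,r))^{1/n}$ with $B(x_0,r)\subset B^d(x_0,R)$; a Vitali covering argument modelled on the proof of Corollary~\ref{Minkowski}, using porosity (Theorem~\ref{porous1}), the relative isoperimetric inequality, and the perimeter upper bound (Lemma~\ref{upperGrowth}), produces pairwise disjoint $d$-balls $\{B^d(y_i,\rho_i)\}$ whose enlargements cover $\partial E\cap B^d(x_0,R)$ with $\sum\rho_i^{n-1}\lesssim R^{n-1}$. Since $d\gtrsim|\cdot|$ on $U$, each $B^d(y_i,5\rho_i)$ sits inside a Euclidean ball of comparable radius, and this transferred cover yields $\mathcal{H}^{n-1}_{Euc}(\partial E\cap B^d(x_0,R))\lesssim R^{n-1}\approx\mu(B(x_0,r))^{(n-1)/n}$.

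For the global finiteness, $E$ has finite perimeter in $(\R^n,d,\mu)$, and by Theorem~\ref{theorem:density} every boundary point lies in the measure-theoretic boundary, so the codimension-$1$ Hausdorff measure $\mathcal{H}$ of Lemma~\ref{Tech2} is finite on $\partial E$; Ahlfors $n$-regularity (Lemma~\ref{Ahlfors}) makes $\mathcal{H}$ comparable to the $(n-1)$-dimensional Hausdorff measure $\mathcal{H}^{n-1}_d$ in the metric $d$, and the $d\gtrsim|\cdot|$ transfer on $U\supset\partial E$ then gives $\mathcal{H}^{n-1}_{Euc}(\partial E)\le C\mathcal{H}^{n-1}_d(\partial E)<\infty$. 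Rectifiability now follows from Theorem~\ref{main_rect_Euc}, since $\partial E\setminus(F_0\cup D_\infty)$ is $(n-1)$-rectifiable and adjoining the $\mathcal{H}^{n-1}_{Euc}$-null set $F_0\cup D_\infty$ preserves $(n-1)$-rectifiability. The main obstacle I expect is the Vitali cover estimate on $\mathcal{H}^{n-1}_d(\partial E\cap B^d(x_0,R))$ together with its clean transfer to Euclidean Hausdorff measure; crucially, both steps rely only on the lower bound $\omega\geq\alpha$ and not on any upper bound on $\omega$, which is essential since not every strong $A_\infty$-weight is bounded above.
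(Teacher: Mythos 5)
Your argument is correct and reaches the conclusion, but it reorders the logic relative to the paper and does extra work in one place. The paper's proof is: (i) $\omega\ge\alpha$ forces $\liminf_{r\to0}\mu(B(x,r))/r^n\ge C_n\alpha>0$ at every $x\in\partial E$, so $F_0=\emptyset$ \emph{and} every boundary point lies in $A_{1/k_0}$ for $1/k_0<C_n\alpha$; hence by Lemma~\ref{partition} one has $\partial E\setminus Z=F_\infty=A_{1/k_0}\setminus Z$, and Lemma~\ref{Tech1} gives $\mathcal{H}^{n-1}_{Euc}(\partial E)\le C k_0^{(n-1)/n}P(E,\Omega)<\infty$ in one line; (ii) once finiteness is known, the standard upper-density theorem \cite[Theorem~6.2]{Mat} immediately yields $\mathcal{H}^{n-1}_{Euc}(D_\infty)=0$, and Theorem~\ref{main_rect_Euc} finishes. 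You handle $F_0$ identically, but for $D_\infty$ you instead prove the pointwise bound $\mathcal{H}^{n-1}_{Euc}(\partial E\cap B(x_0,r))\le C\mu(B(x_0,r))^{(n-1)/n}$ by a localized Vitali cover (porosity, relative isoperimetric inequality, Lemma~\ref{upperGrowth}, transfer via $d\gtrsim|\cdot|$) so that $D_\infty\subset Z$, and you obtain global finiteness from $P(E,\cdot)\approx\mathcal{H}\llcorner\partial E$ plus Ahlfors regularity plus the same metric comparison. Both of your steps are sound -- the covering estimate you sketch is exactly the machinery behind Lemma~\ref{Tech1} and the remark after Corollary~\ref{Minkowski}, and the identification of perimeter with codimension-one Hausdorff measure is already invoked in the proof of Lemma~\ref{Tech2} -- and your route buys a stronger, everywhere-valid upper bound on the Euclidean surface density rather than an a.e.\ statement. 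But it is worth noticing that once $\mathcal{H}^{n-1}_{Euc}(\partial E)<\infty$ is established, the whole $D_\infty$ discussion collapses to a citation of \cite[Theorem~6.2]{Mat}, which is the shortcut the paper takes; also, your metric comparison $d(x,y)\gtrsim|x-y|$ tacitly needs $B(x,|x-y|)$ to stay inside the neighborhood where $\omega\ge\alpha$, which is harmless at the small scales where you use it but should be said.
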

 
 \begin{proof}
By Lemma~\ref{partition}, we have $\partial E=Z\cup F_\infty\cup F_0$, with $\mathcal{H}^{n-1}_{Euc}(Z)=0$
 and $\mathcal{H}^{n-1}_{Euc}$ being $\sigma$-finite on $F_\infty$.  It follows 
 from the assumption $\omega\ge\alpha$ almost everywhere  
 that for all $x\in\partial E$ we have
 \[
    \liminf_{r\to 0^+}\frac{\mu(B(x,r))}{r^n}\ge C_n\,\alpha>0,
 \]
 that is, $F_0$ is empty.  
 
 Now we look at $F_\infty=\bigcup_{k\in\N}A_{1/k}\setminus Z$. 
 Because of the assumption that $\omega\ge \alpha$ almost everywhere, we know that $F_\infty=A_{1/k_0}\setminus Z$
 where $k_0\in\N$ large enough so that $1/k_0<C_n\,\alpha$. So by Lemma \ref{Tech1} we have that 
\[
H^{n-1}(\partial E)=\mathcal{H}^{n-1}_{Euc}(F_{\infty})<\infty.
\] 
Thus an application of~\cite[Theorem~6.2]{Mat} gives
$\mathcal{H}^{n-1}_{Euc}(D_{\infty})=0$, and 
so $\partial E$ is rectifiable by Theorem~\ref{main_rect_Euc}.  
 \end{proof}

\noindent Address:\\
\noindent J.K.:  Aalto University, Department of Mathematics, P.O. Box 11100, 
FI-00076 Aalto, Finland. \\
\noindent 
E-mail: {\tt juha.kinnunen@tkk.fi}\\

\noindent R.K.:  Department of Mathematics and Statistics, 
P.O. Box 68 (Gustaf H\"allstr\"omin katu 2b), 
FI-00014 University of Helsinki, Finland. \\
\noindent 
E-mail: {\tt riikka.korte@helsinki.fi}\\

\noindent A.L.: Department of Mathematical Sciences, P.O. Box 210025, University of 
Cincinnati, Cincinnati, OH 45221--0025, U.S.A. \\
\noindent 
E-mail: {\tt andrew.lorent@uc.edu}\\

\noindent N.S.: Department of Mathematical Sciences, P.O. Box 210025, University of 
Cincinnati, Cincinnati, OH 45221--0025, U.S.A. \\
\noindent 
E-mail: {\tt nages@math.uc.edu} \\

\end{document}